\g@addto@macro\bfseries{\boldmath}
 \numberwithin{equation}{section}
\theoremstyle{plain}
    \newtheorem{theorem}[equation]{Theorem}
    \newtheorem{corollary}[equation]{Corollary}
    \newtheorem*{theorem*}{Theorem}
    \newtheorem*{proposition*}{Proposition}
    \newtheorem*{corollary*}{Corollary}
    \newtheorem*{lemma*}{Lemma}
    \newtheorem*{conjecture*}{Conjecture}
    \newtheorem{definition-theorem}[equation]{Definition/Theorem}
    \newtheorem{definition-lemma}[equation]{Definition/Lemma}
    \newtheorem{question}[equation]{Question}
\theoremstyle{definition}
    \newtheorem{definition}[equation]{Definition}
    \newtheorem{example}[equation]{Example}
    \newtheorem{examples}[equation]{Examples}
    \newtheorem{remark}[equation]{Remark}
    \newcommand{\N}{\mathbb{N}}
    \newcommand{\Z}{\mathbb{Z}}
\renewcommand{\phi}{\varphi}
\let\epsilon\varepsilon
\newcommand{\into}{\hookrightarrow}
\newcommand{\id}{\mathrm{id}}
    \DeclareMathOperator{\End}{End}
\newcommand{\source}{\operatorname{s}}
\newcommand{\target}{\operatorname{t}}
\newcommand{\category}{\mathsf}
\newcommand{\functor}{\mathcal}
\newcommand{\Mod}{\category{Mod}}
\newcommand{\CKL}{\category{Diag}^{\sqcup}}
\newcommand{\opp}{\mathrm{opp}}
\newcommand{\Diag}{\category{Diag}}
\newcommand{\PS}{\operatorname{PS}}
\newcommand{\BF}{\operatorname{BF}}
\newcommand{\open}{\circ}
\newcommand{\closed}{\bullet}
\title{Flow equivalence of diagram categories and Leavitt path algebras}
\author{Tyrone Crisp}
\address{Department of Mathematics \& Statistics, University of Maine.
5752 Neville Hall, Room 333.
Orono, ME 04469 USA}
\email{tyrone.crisp@maine.edu}
\author{Davis MacDonald}
\email{davis.macdonald@maine.edu}
 \date{June 2022}
\begin{document}

\begin{abstract}
Several constructions on directed graphs originating in the study of flow equivalence in symbolic dynamics (e.g., splittings and delays)  are known to preserve the Morita equivalence class of Leavitt path algebras over any coefficient field $\mathbb{F}$. We prove that many of these equivalence results are not only independent of $\mathbb{F}$, but are largely independent of linear algebra altogether. We do this by formulating and proving generalisations of these equivalence theorems in which the category of $\mathbb{F}$-vector spaces is replaced by an arbitrary category with binary coproducts, showing that the Morita equivalence results for Leavitt path algebras depend only on the ability to form direct sums of vector spaces. We suggest that the framework developed in this paper may be useful in studying other problems related to Morita equivalence of Leavitt path algebras.
\end{abstract}

\maketitle

\section{Introduction}

Suppose that the picture below represents a communication network, with each vertex  in the picture representing a node in the network, and each directed edge representing a one-way channel along which information flows from the source node to the target node. 
\begin{equation}\label{eq:intro-graph}
\xymatrix{
\open \ar[r] & \closed \ar[r] & \closed \ar@/^3pt/[dl] & \open \ar[l] \\
\open \ar[ur] \ar[r] & \closed \ar[u] \ar@/^3pt/[ur]& \open \ar[u] &
}
\end{equation}

We assume that new information enters the network through the \emph{source} nodes $\circ$ (i.e., those nodes with no incoming channels), while each non-source node $\closed$ simply collates all of the information that it receives through its incoming channels and transmits the collated information along each of its outgoing channels. 

We might model the distribution of information throughout the network by assigning to each vertex $v$ a set $\functor{D}_v$ containing the information known at the node $v$. The assumption that the non-source nodes collate and pass on the information that they receive is then expressed by the equality
\begin{equation}\label{eq:intro-union}
\functor{D}_v = \bigcup_{\substack{\text{$e$ an edge}\\ \text{with target $v$}}} \functor{D}_{\text{source}(e)},
\end{equation}
for each non-source vertex $v$. Alternatively, we might suppose that at each non-source node $v$ we want to distinguish between copies of the same piece of information arriving through different channels (perhaps to allow multiple sources to confirm one another), and in this case we would replace the union in \eqref{eq:intro-union} with a disjoint union:
\begin{equation}\label{eq:intro-disjoint-union}
\functor{D}_v = \bigsqcup_{\substack{\text{$e$ an edge}\\ \text{with target $v$}}} \functor{D}_{\text{source}(e)}.
\end{equation}

The conditions \eqref{eq:intro-union} and \eqref{eq:intro-disjoint-union} admit a natural generalisation to a more abstract setting, in which the information in the network, and the means by which that information is communicated from one node to another, are modelled respectively by the objects and the morphisms in an arbitrary category. If $G$ is a directed graph (eg, \eqref{eq:intro-graph}), and if $\category{C}$ is a category, then a \emph{diagram in $\category{C}$ of shape $G$} consists of a collection of objects $\functor{D}_v$ of $\category{C}$, one object for each vertex $v$ in the graph $G$; and a collection of morphisms $\functor{D}_e : \functor{D}_{\text{source(e)}} \to \functor{D}_{\text{target}(e)}$ in $\category{C}$, one for each directed edge $e$ in the graph $G$. We say that such a diagram satisfies the \emph{coproduct condition} if for each vertex $v$ that is not a source, the collection of morphisms
\[
\left( \functor{D}_e\ |\ e\text{ is an edge with target $v$}\right)
\]
makes $\functor{D}_v$ into a \emph{coproduct}, in the category $\category{C}$, of the family of objects 
\[
\left( \functor{D}_{\text{source}(e)}\ |\ e\text{ is an edge with target $v$}\right).
\] 
If we let $X$ be the set of all possible pieces of information in our hypothetical network, and we let $\category{C}$ be the category associated with the partially ordered set  of subsets of $X$ under inclusion, then the coproduct condition becomes the condition \eqref{eq:intro-union}. If we let $\category{C}$ be the category of sets and functions, then the coproduct condition becomes the condition \eqref{eq:intro-disjoint-union}. We denote by $\CKL_{\category{C}}(G)$ the category of diagrams in $\category{C}$ of shape $G$ satisfying the coproduct condition, with the natural notion of morphism of diagrams. (A formal definition, along with reminders about the categorical terminology, will be given in Section \ref{sec:def}.) 

The starting point of this paper is the observation that this intuitive picture of information moving through a network can be used to describe modules over \emph{Leavitt path algebras} (cf. \cite{Abrams-book}): if we take $\category{C}=\Mod(\mathbb{F})$ to be the category of vector spaces over some field $\mathbb{F}$, with linear maps as morphisms, and if no vertex in the graph $G$ is the target of infinitely many edges, then we have an equivalence of categories
\begin{equation}\label{eq:intro-LPA-equivalence}
\CKL_{\Mod(\mathbb{F})}(G) \cong \Mod(L_{\mathbb{F}}(G^{\opp}))
\end{equation}
where the right-hand side is the category of modules over the Leavitt path algebra $L_{\mathbb{F}}(G^{\opp})$ of the opposite graph $G^{\opp}$ (i.e., directed graph obtained from $G$ by reversing the orientation of each edge). A proof of this equivalence was given in  \cite{Koc}, where it was noted that the finiteness condition on $G$ can be dropped by modifying the coproduct condition on diagrams. In Theorem \ref{thm:LPA-equivalence} we prove such a generalisation. (Note that in this paper we consider only ordinary directed graphs, rather than the separated graphs considered in \cite{Koc}.) This result, which has surely been discovered or intuited in one form or another by many who work with Leavitt path algebras, gives a picture of modules over Leavitt path algebras that is easy to visualise and axiomatically very simple---arguably simpler than the definition of the algebras themselves in terms of generators and relations.

The equivalence \eqref{eq:intro-LPA-equivalence} is significant because many interesting questions about Leavitt path algebras are concerned with \emph{Morita equivalences} between these algebras, meaning that they are concerned with the categories $\Mod(L_{\mathbb{F}}(G^{\opp}))$, rather than with the algebras per se. Considering the left-hand side of \eqref{eq:intro-LPA-equivalence} reveals a way to generalise these categories that is not immediately apparent when considering only the right-hand side: namely, one can replace $\Mod(\mathbb{F})$ not just with $\Mod(R)$ for an arbitrary commutative ring $R$ (as is done in \cite{Tomforde:R}, for example), but with any category one chooses. The main results of this paper show that many of the known Morita equivalence results for Leavitt path algebras extend to the far more general setting of the diagram categories $\CKL_{\category{C}}(G)$, meaning that these results are `really' theorems about coproducts, rather than about Leavitt path algebras.

To be more precise, our results generalise earlier work (principally results  of \cite[Section 3]{Abrams-FE}, which are modelled on results of \cite{Bates-Pask} for graph $C^*$-algbras) that relate \emph{flow equivalence} of graphs to Morita equivalences of Leavitt path algebras. Theorems \ref{thm:sinks}, \ref{thm:out-delay}, \ref{thm:heads}, \ref{thm:in-delay}, \ref{thm:out-split}, and \ref{thm:in-split} assert that if $G'$ is a directed graph obtained from a graph $G$ by one of several procedures (removing a sink, adding a head to a source, out- or in-delay, or out- or in-splitting, sometimes under additional finiteness conditions) then for each category $\category{C}$ (sometimes required to admit certain coproducts) one has an equivalence of categories $\CKL_{\category{C}}(G) \cong \CKL_{\category{C}}(G')$. 
A notable exception to these results is the desingularisation procedure introduced in the context of graph $C^*$-algebras by Drinen and Tomforde  \cite{Drinen-Tomforde} and later for Leavitt path algebras by \cite{Abrams-desingularisation}. In Section \ref{subsec:desingularisation} we give a simple example to show that the category $\CKL_{\category{C}}(G)$ is in general not invariant under desingularisation.

Another construction on directed graphs that has been much studied in connection with Leavitt path algebras is the \emph{Cuntz splice}; see for instance the discussion in \cite[Section 2]{Abrams-FE}. A long-standing open problem related to this construction asks whether the Leavitt path algebras of the graphs
\begin{equation}\label{eq:intro-Cuntz}
G=\xygraph{
{\closed}="v_1"(:@(ul,ur)"v_1":@(dr,dl)"v_1")
}
\qquad \text{and} \qquad
H=\xygraph{
{\closed}="v_1"(:@(ul,ur)"v_1":@(dr,dl)"v_1":@/^5pt/[r]{\closed}="v_2"
:@(ul,ur)"v_2":@/^5pt/[r]{\closed}="v_3":@(ul,ur)"v_3":@/^5pt/"v_2":@/^5pt/"v_1")
}
\end{equation}
are Morita equivalent. The analogous question for graph $C^*$-algebras is known  to have a positive answer, thanks to \cite{Rordam}, but the proof relies on analytic techniques that are not available in the algebraic setting. The question for Leavitt path algebras can be rephrased as asking whether the categories $\CKL_{\Mod(\mathbb{F})}(G)$ and $\CKL_{\Mod(\mathbb{F})}(H)$ are equivalent, and put in these terms the question admits some obvious generalisations and analogues:
\begin{question}\label{q:Cuntz}
For the graphs $G$ and $H$ of \eqref{eq:intro-Cuntz},
\begin{enumerate}[\rm(1)]
\item Are  $\CKL_{\category{C}}(G)$ and $\CKL_{\category{C}}(H)$ equivalent for every category $\category{C}$? 
\item Are $\CKL_{\category{C}}(G)$ and $\CKL_{\category{C}}(H)$ equivalent for every category $\category{C}$ with binary coproducts? 
\item Are $\CKL_{\category{C}}(G)$ and $\CKL_{\category{C}}(H)$ equivalent for some specific choices of category $\category{C}$? 
\end{enumerate}
\end{question}
As of now we do not have many answers to these questions, besides some examples of categories $\category{C}$ giving a positive answer to question (3); see Example \ref{ex:Cuntz-P}. Negative answers to questions (1) and/or (2), if found, would have significant consequences for the original question about Morita equivalence of the Leavitt path algebras of $G$ and $H$: for instance, a negative answer to (2) would mean that any Morita equivalence between $L_{\mathbb{F}}(G)$ and $L_{\mathbb{F}}(H)$ must use more linear algebra than just the formation of direct sums of vector spaces, in contrast to the known flow equivalence results.

Question \ref{q:Cuntz} cannot be settled using the flow equivalence theorems established in this paper, since the graphical constructions  appearing in those results all preserve the \emph{Parry-Sullivan number} $\PS(G)$ of a finite directed graph $G$ (\cite{Parry-Sullivan}), while the graphs in \eqref{eq:intro-Cuntz} have different Parry-Sullivan numbers. For this reason it is natural to consider the relationship between the invariant $\PS(G)$ and the categories $\CKL_{\category{C}}(G)$:

\begin{question}\label{q:PS}
Is there a category $\category{C}$ with binary coproducts such that for all irreducible, non-trivial finite directed graphs $G$ and $H$, the existence of an equivalence $\CKL_{\category{C}}(G)\cong \CKL_{\category{C}}(H)$ implies $\PS(G)=\PS(H)$?
\end{question}

See Section \ref{subsec:PSBF} for the terminology. A positive answer to Question \ref{q:PS} would imply a negative answer to part (2) (hence also to part (1)) of Question \ref{q:Cuntz}, since $\PS(G)\neq \PS(H)$.

Another, apparently different linear-algebra-free approach to equivalence theorems for Leavitt path algebras and graph $C^*$-algebras involves realising these algebras as convolution algebras of  topological groupoids (see \cite{KPRR,Steinberg,CFST}), and proving that certain constructions on graphs give rise to equivalences of groupoids and hence to Morita equivalences of graph algebras. See \cite{Clark-Sims}  for an instance of this approach. The precise relationship between the groupoids of \cite{KPRR} and the diagram categories studied here is not yet clear to us.

\subsection*{Contents of the paper} In Section \ref{sec:def} we review some terminology and establish notation related to directed graphs and coproducts, and we define the categories $\CKL_{\category{C}}(G)$ that are the main focus of this paper. In Section \ref{sec:LPA} we state and prove Theorem \ref{thm:LPA-equivalence}, extending the equivalence \eqref{eq:intro-LPA-equivalence} to arbitrary graphs. Section \ref{sec:flow} contains our main results relating moves on graphs to equivalences of diagram categories, and we conclude the paper in Section \ref{subsec:PSBF} with a brief discussion of the invariants of Parry-Sullivan and Bowen-Franks and their relation to our diagram categories.

\section{Terminology and notation}\label{sec:def}

\subsection{Directed graphs} 

\begin{definition}
A \emph{directed graph} $G=(G^0,G^1,\source,\target)$ consists of a nonempty set $G^0$ of \emph{vertices}; a set $G^1$ of \emph{edges}; and two maps of sets $\source, \target : G^1\to G^0$ describing, respectively, the \emph{source} and the \emph{target} of each edge. We visualise these data as in \eqref{eq:intro-graph} by representing each vertex as a point, and each edge $e\in G^1$ as an arrow pointing from the vertex $\source(e)$ to the vertex $\target(e)$.

A vertex $v\in G^0$ with $\target^{-1}(v)=\emptyset$ (that is, a vertex that is not the target of any edge) is called a \emph{source}. A vertex $v\in G^0$ with $\# \target^{-1}(v)=\infty$ (that is, a vertex that is the target of infinitely many edges) is called an \emph{infinite receiver}. A vertex $v\in G^0$ with $\source^{-1}(v)=\emptyset$ (that is, a vertex that is not the source of any edge) is called a \emph{sink}.
\end{definition}

\subsection{Coproducts}\label{subsec:coproducts}

We shall use the basic language of categories and functors,  for which one can consult \cite{MacLane} or \cite{Riehl}, for instance. The notion of \emph{coproducts} in a category will play a central role in this paper;  let us briefly review the definition and establish some notation.

\begin{definition}
Let $\category{C}$ be a category and let $(X_i\ |\ i\in I)$ be a  collection of objects of $\category{C}$ indexed by a nonempty set $I$. (We use  $(\ |\ )$ instead of $\{\ |\ \}$ to indicate that by a `collection' or a `family' indexed by $I$ we mean a function $i\mapsto X_i$ rather than the image of that function.) A \emph{coproduct} of $(X_i\ |\ i\in I)$ in $\category{C}$ is an object $X$ of $\category{C}$ together with a collection of morphisms $(\phi_i : X_i \to X\ |\ i\in I)$ in $\category{C}$, having the property that whenever $Y$ is an object of $\category{C}$ equipped with a family of morphisms $(\psi_i : X_i \to Y\ |\ i\in I)$, there is a unique morphism $\psi:X\to Y$ satisfying $\psi\circ \phi_i = \psi_i$ for each $i$. We say that $\category{C}$ \emph{has nonempty coproducts} if every nonempty collection of objects of $\category{C}$ has a coproduct. We say that $\category{C}$ \emph{has binary coproducts} if every pair of objects $(X_1,X_2)$ of $\category{C}$ has a coproduct (equivalently, every collection of objects of $\category{C}$ indexed by a finite nonempty set $I$ has a coproduct). In this paper we will have no need to consider coproducts of empty families.

If a collection of objects $(X_i\ |\ i\in I)$ in $\category{C}$ has a coproduct then we choose one coproduct and denote it by $\bigsqcup_{i\in I} X_i$:  implicit in this notation is both an object $\bigsqcup_{i\in I} X_i$ of $\category{C}$, and a collection of morphisms $X_i\to \bigsqcup_{i\in I} X_i$ which we will refer to as the canonical morphisms into the coproduct. The universal property of coproducts ensures that any two coproducts of the same family of objects are canonically isomorphic, so we don't need to worry about choosing the coproducts in a coherent way. If $(\psi_i: X_i\to Y\ |\ i\in I)$ is a family of morphisms then we write $\bigsqcup_{i\in I} \psi_i : \bigsqcup_{i\in I} X_i \to Y$ for the induced morphism $\psi$ appearing in the definition of coproducts. More generally, if $(Y_j\ |\ j\in J)$ is another family of objects in $\category{C}$ having a coproduct, and if for each $i\in I$ we have a morphism $\psi_i: X_i \to Y_{j_i}$ for some $j_i\in J$, then the composite morphisms $X_i \xrightarrow{\psi_i} Y_{j_i} \xrightarrow{\text{canonical}} \bigsqcup_{j\in J} Y_j$ induce a morphism $\bigsqcup_{i\in I} X_i \to \bigsqcup_{j\in J} Y_j$, which we shall also denote by $\bigsqcup_{i\in I} \psi_i$.
\end{definition}

\begin{examples}
\begin{enumerate}[\rm(1)]
\item If the set $I$ contains a single element $i$ then we can take $\bigsqcup_{i\in I}X_i = X_i$ equipped with the identity map $X_i\to \bigsqcup_{i\in I}X_i$.
\item The category $\category{Set}$ of sets and functions has nonempty coproducts: we can take $\bigsqcup_{i\in I} X_i$ to be the disjoint union of the sets $X_i$, with the canonical inclusions $X_i\to \bigsqcup_{i\in I} X_i$. 
\item Let $(P,\leq)$ be a partially ordered set, and let $\category{P}$ denote the category whose objects are the elements of the set $P$, such that for each pair of objects $x,y\in P$ there is either one morphism $x\to y$ (if $x\leq y$) or no such morphisms (if $x\not\leq y$). An object $y\in P$ is a coproduct of a nonempty collection of objects $(x_i\ |\ i\in I)$ if and only if $y$ is the supremum of the set $\{x_i\ |\ i\in I\}$. Thus $\category{P}$ has binary coproducts if and only if every  nonempty finite subset of ${P}$ has a supremum, while $\category{P}$ has nonempty coproducts if every nonempty subset has a supremum.
\item The category $\Mod(R)$ of left modules over a ring $R$ with identity has nonempty coproducts: we can take $\bigsqcup_{i\in I} M_i = \bigoplus_{i\in I} M_i$, the direct sum of the $R$-modules $M_i$, equipped with the canonical $R$-module embeddings $M_i\into \bigoplus_{i\in I} M_i$. 
\end{enumerate}
\end{examples}

\subsection{Categories of diagrams}

If $G=(G^0,G^1,\source,\target)$ is a directed graph, and $\category{C}$ is a category, then a \emph{diagram} $\functor{D}$ in $\category{C}$ of shape $G$ consists of an object $\functor{D}_v$ of $\category{C}$ for each vertex $v\in G^0$; and a morphism $\functor{D}_e : \functor{D}_{\source(e)}\to \functor{D}_{\target(e)}$ in $\category{C}$ for each edge $e\in G^1$. A morphism of diagrams $T:\functor{D}\to \functor{E}$ consists of a morphism $T_v:\functor{D}_v\to \functor{E}_v$ in $\category{C}$ for each vertex $v\in G^0$, such that for each edge $e\in G^1$ one has $T_{\target(e)}\circ \functor{D}_e = \functor{D}_e\circ T_{\source(e)}$. We let $\Diag_{\category{C}}(G)$ denote the category of diagrams in $\category{C}$ of shape $G$.

\begin{definition}\label{def:CKL}
Let $G$ be a directed graph and let $\category{C}$ be a category. A diagram $\functor{D}$ in $\category{C}$ of shape $G$ is said to satisfy the \emph{coproduct condition} if for each vertex $v\in G^0$ that is not a source, the collection of morphisms
\[
\left( \left. \functor{D}_e: \functor{D}_{\source(e)} \to \functor{D}_v\ \right|\ e\in \target^{-1}(v) \right )
\]
makes $\functor{D}_v$ a coproduct, in $\category{C}$, of the collection of objects
\[
\left( \left. \functor{D}_{\source(e)}\ \right|\ e\in \target^{-1}(v) \right).
\]
We often express this condition by saying that the map
\[
\bigsqcup_{e\in \target^{-1}(v)} \functor{D}_{e} : \bigsqcup_{e\in \target^{-1}(v)} \functor{D}_{\source(e)}\to \functor{D}_v
\]
is an isomorphism, where this  includes the requirement that the coproduct of the $\functor{D}_{\source(e)}$s should exist. We denote by $\CKL_{\category{C}}(G)$ the category whose objects are the diagrams in $\category{C}$ of shape $G$ satisfying the coproduct condition, and whose morphisms are the morphisms of diagrams defined above.
\end{definition}

\begin{example}\label{ex:no-edges}
If $G$ has no edges then the coproduct condition is vacuously satisfied by all diagrams of shape $G$, and the category $\CKL_{\category{C}}(G)$ is equivalent in an obvious way to the product category $\category{C}^{\#G^0}$.
\end{example}

\begin{example}\label{ex:one-edge}
If a vertex $v\in G^0$ is the target of exactly one edge $e$, then the coproduct condition at $v$ is the condition that  $\functor{D}_e : \functor{D}_{\source(e)} \to \functor{D}_v$ should be an isomorphism. So, for example, if $G$ is the graph $\xygraph{\bullet="v":@(ur,dr)"v"}$ then $\CKL_{\category{C}}(G)$ is the category whose objects are pairs $(X,\phi)$, where $X$ is an object of $\category{C}$ and $\phi$ is an automorphism of $X$; and whose morphisms $(X,\phi)\to (Y,\psi)$ are those morphisms $\rho:X\to Y$ in $\category{C}$ that satisfy $\psi\circ \rho = \rho \circ \phi$.
\end{example}

\begin{example}
When $\category{C}=\category{Set}$ the coproduct condition  is the requirement that if $v\in G^0$ is not a source then the maps $\functor{D}_e : \functor{D}_{\source(e)} \to \functor{D}_v$ induce a bijection $\bigsqcup_{e\in \target^{-1}(v)} \functor{D}_{\source(e)} \xrightarrow{\cong} \functor{D}_v$. If $G$ has no infinite receivers then an object of $\CKL_{\category{Set}}(G)$ is essentially the same thing as a \emph{$G^{\opp}$-algebraic branching system} as studied in \cite{Goncalves}. (If $G$ does have infinite receivers then the $G^{\opp}$-branching systems are the objects of $\CKL_{\category{Set}}(G_+)$, where $G_+$ is defined in Definition \ref{def:plus} below.)
\end{example}

\begin{example}\label{ex:CKL_P}
Let $\category{P}$ be the category corresponding to a partially ordered set $(P,\leq)$. A diagram $\functor{D}$ in $\category{P}$ of shape $G$ is the same thing as a function $\functor{D}:G^0\to P$ with the property that $\functor{D}_{\source(e)}\leq \functor{D}_{\target(e)}$ for each $e\in G^1$. The collection of all such diagrams is itself a partially ordered set, with  $\functor{D}\leq \functor{E}$ if and only if $\functor{D}_v \leq \functor{E}_v$ for every $v\in G^0$. The coproduct condition in this case is the requirement that if $v$ is not a source then $\functor{D}_v = \sup\{ \functor{D}_{\source(e)}\ |\ e\in \target^{-1}(v)\}$, and it is clear from this description that the partially ordered set $\CKL_{\category{P}}(G)$ does not depend on the number of edges in $G$ having a given source and target, but only on whether or not that number is zero. 
\end{example}

\begin{example}\label{example:Mod(R)}
When $\category{C}= \Mod(R)$ the coproduct condition is the requirement that if $v$ is not a source then the maps $\functor{D}_e : \functor{D}_{\source(e)} \to \functor{D}_v$ induce an isomorphism of $R$-modules $\bigoplus_{e\in \target^{-1}(v)} \functor{D}_{\source(e)} \xrightarrow{\cong} \functor{D}_v$. It was proved in \cite[Proposition 3.2]{Koc} that if $G$ has no infinite receivers then for each field $\mathbb{F}$ the category $\CKL_{\Mod(\mathbb{F})}(G)$ is equivalent to the category of modules over the Leavitt path algebra $L_{\mathbb{F}}(G^{\opp})$. It was also noted there that the finiteness hypothesis can be relaxed at the cost of introducing a slightly more complicated coproduct condition. We shall present a generalisation of this sort in Theorem \ref{thm:LPA-equivalence}.
\end{example}

\subsection{Functoriality in $\category{C}$}

For a fixed directed graph $G$, the passage from a category $\category{C}$ to the diagram category $\Diag_{\category{C}}(G)$ is functorial:  if $\functor{F}:\category{C}\to \category{D}$ is a functor, then we obtain a functor $\Diag_{\category{C}}(G)\to \Diag_{\category{D}}(G)$ by applying $\functor{F}$ to each object and morphism in a diagram. If $\functor{F}$ preserves coproducts then the induced functor on diagrams preserves the coproduct condition from Definition \ref{def:CKL}, and hence induces a functor
\[
\CKL_{\functor{F}}(G) : \CKL_{\category{C}}(G)\to \CKL_{\category{D}}(G).
\]
An example of this construction is already well known in the literature:

\begin{example}
Let $R$ be a commutative ring with identity, and let $\functor{F}:\category{Set}\to \Mod(R)$ be the functor that sends each set $X$ to the free $R$-module on $X$, and each map of sets to the induced map on free modules. Since $\functor{F}$ sends disjoint unions of sets to direct sums of free modules, it preserves coproducts and thus induces a functor from $\CKL_{\category{Set}}(G)$ to $\CKL_{\Mod(R)}(G)$. This functor is the familiar construction of modules over Leavitt path algebras from algebraic branching systems described in \cite{Goncalves} and applied in  \cite{Chen}, among other places.
\end{example}

The functoriality of $\CKL_{\category{C}}(G)$ can be expressed more comprehensively using the language of $2$-categories: the assignments $\category{C}\mapsto \CKL_{\category{C}}(G)$ and $\functor{F}\mapsto \CKL_{\functor{F}}(G)$ extend to a \emph{$2$-functor} from the $2$-category of categories, coproduct-preserving functors, and natural transformations to the $2$-category of categories, functors, and natural transformations. (See \cite{Johnson-Yau} for the language.) The equivalences of categories $\CKL_{\category{C}}(G)\cong \CKL_{\category{C}}(H)$ that we exhibit in Theorems \ref{thm:sinks}, \ref{thm:out-delay}, \ref{thm:heads}, \ref{thm:in-delay}, \ref{thm:out-split}, and \ref{thm:in-split} can likewise be promoted to equivalences of $2$-functors, meaning roughly that these equivalences are natural with respect to $\category{C}$. This naturality is in fact very easy to see once one knows what to look for, but in order to avoid overloading the paper with definitions we have decided against going into the $2$-categorical details here.

\section{Modules over Leavitt path algebras as diagrams}\label{sec:LPA}

In this section we formulate and prove the generalisation of \cite[Proposition 3.2]{Koc} promised in Example \ref{example:Mod(R)}. 

First let us recall the definition of Leavitt path algebras, for which our main reference will be \cite{Abrams-book}. In the graph $C^*$-algebra literature one finds two distinct conventions regarding the defining relations in graph algebras, and of these two the convention used in \cite{Raeburn} (where $e=\target(e)e\source(e)$ for each edge $e$) is the most convenient for our purposes here. That convention does not, however, seem to have been adopted in the Leavitt path algebra literature. We have attempted in this paper to balance notational convenience with adherence to the established conventions by adopting the definition used in \cite{Abrams-book} (and in all of the other papers on Leavitt path algebras listed in our bibliography), but  always applying that definition to the \emph{opposite} graph $G^{\opp}$ of a directed graph $G$ (that is, the graph with the same vertices and edges as $G$, but with the orientation of each edge reversed). 

\begin{definition}\label{def:LPA}
Let $G=(G^0,G^1,\source,\target)$ be a directed graph, and let $R$ be a commutative ring with identity. The \emph{Leavitt path algebra} $L_R(G^{\opp})$ of the opposite graph $G^{\opp}=(G^0,G^1,\target,\source)$ is the universal $R$-algebra with generators $G^0 \sqcup G^1 \sqcup (G^1)^*$ (where $(G^1)^* = \{ e^*\ |\ e\in G^1\}$ is just a second copy of $G^1$, distinguished from the first copy by the otherwise meaningless notation $*$) and with the following relations:
\begin{enumerate}[{\rm(1)}]
\item $uv=\delta_{u,v}v$ for all $u,v\in G^0$;
\item $\target(e)e=e \source(e)=e$ for all $e\in G^1$;
\item $\source(e)e^*=e^*\target(e)= e^*$ for all $e\in G^1$;
\item $e^* f = \delta_{e,f} \source(e)$ for all $e,f\in G^1$; and
\item $v = \sum_{\substack{e\in  \target^{-1}(v)}} ee^*$ for all  $v\in G^0$ for which the set $\target^{-1}(v)$ is finite and non-empty.
\end{enumerate}
\end{definition}

The fact that the relation (5) does not say anything about infinite receivers is the reason why the equivalence \eqref{eq:intro-LPA-equivalence} does not hold for arbitrary graphs. To obtain an equivalence we must allow an extra degree of freedom in the coproduct condition at each infinite receiver, and we will see that this extra degree of freedom can be incorporated very simply by adding an extra vertex to the graph as described in  the next definition.

\begin{definition}\label{def:plus}
Let $G=(G^0,G^1,\source,\target)$ be a directed graph. Define a new directed graph $G_+=(G_+^0,G_+^1,\source_+,\target_+)$ as follows:
\[
\begin{aligned}
& G_+^0 \coloneqq  G^0 \sqcup \{ v_+ \ |\ v\in G^0,\, \#\target^{-1}(v)=\infty\} \\
& G_+^1 \coloneqq G^1 \sqcup \{ e_+\ |\ e\in G^1,\, \#\target^{-1}(\source(e))=\infty \} \\
&\source_+(e) = \source(e),\quad \source_+(e_+)=\source(e)_+ \\
& \target_+(e) = \target(e), \quad \target_+(e_+)=\target(e).
\end{aligned}
\] 
In words, $G_+$ is obtained from $G$ by adding an extra vertex $v_+$ for each infinite receiver $v$; and for each each edge $e$ whose source is an infinite receiver, adding a new edge $e_+$ from $\source(e)_+$ to $\target(e)$. Note that all of the vertices $v_+$ are sources in $G_+$. Note too that if $G$ has no infinite receivers then $G_+=G$.
\end{definition}

Before stating the main result of this section we need one more piece of terminology. If $A$ is a ring that does not necessarily have a multiplicative identity then we let $\Mod(A)$ be the category whose objects are those left $A$-modules $M$ with the property that
\[
M = AM \coloneqq \{\text{finite sums of elements of the form $am$ with $a\in A$ and $m\in M$}\},
\]
and whose morphisms are the $A$-linear maps. For Leavitt path algebras (and more generally, for rings admitting \emph{local units}), equivalences between the categories $\Mod(A)$ has been shown to be a sensible and useful notion of Morita equivalence; see e.g. \cite{Abrams-Morita}.

\begin{theorem}\label{thm:LPA-equivalence}
Let $G$ be a directed graph and let $R$ be a commutative ring with identity. There is an equivalence of categories $\Mod(L_R(G^{\opp}))\cong \CKL_{\Mod(R)}\left( G_+\right)$.
\end{theorem}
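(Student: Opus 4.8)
The plan is to establish the equivalence by writing down explicit functors in both directions and checking that they are mutually inverse up to natural isomorphism. The backbone of everything is the Peirce decomposition attached to the orthogonal idempotents $\{v\mid v\in G^0\}$ of relation (1) in Definition \ref{def:LPA}: for $M\in\Mod(L_R(G^{\opp}))$, the requirement $M=L_R(G^{\opp})M$ together with the local units $\sum_{v\in F}v$ (for finite $F\subseteq G^0$) gives an internal direct-sum decomposition $M=\bigoplus_{v\in G^0} vM$ of $R$-modules, and every structural map of $L_R(G^{\opp})$ is built from the pieces $vM$. This is the point at which the purely algebraic content enters, and it is why the finiteness condition in \eqref{eq:intro-LPA-equivalence} can be traded for the enlargement $G_+$.

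First I would define the forward functor $\functor{F}\colon\Mod(L_R(G^{\opp}))\to\CKL_{\Mod(R)}(G_+)$. For $v\in G^0$ set $\functor{D}_v:=vM$, and for $e\in G^1$ let $\functor{D}_e$ be left multiplication by $e$; relation (2) guarantees that this is a map $\source(e)M\to\target(e)M$, exactly as an edge of the diagram demands. The core computation, which is essentially that of \cite[Proposition 3.2]{Koc}, is that at a \emph{finite} receiver $v$ the maps $(\functor{D}_e)_{e\in\target^{-1}(v)}$ form a coproduct cone: relation (4), $e^*f=\delta_{e,f}\source(e)$, exhibits the canonical map $\bigoplus_{e}\source(e)M\to vM$ as split injective with the $e^*$ serving as coordinate retractions, while relation (5), $v=\sum_e ee^*$, shows that this same map is surjective. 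The new ingredient is the behaviour at infinite receivers, where relation (5) is unavailable: here the coproduct decomposition of $vM$ fails, and I would use the extra source vertices $v_+$ and edges $e_+$ of $G_+$ (Definition \ref{def:plus}) to record the residual part of the module structure that a coproduct no longer pins down, defining $\functor{D}_{v_+}$ and $\functor{D}_{e_+}$ so as to restore the coproduct condition at every non-source vertex of $G_+$.

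Next I would build the candidate inverse $\functor{G}\colon\CKL_{\Mod(R)}(G_+)\to\Mod(L_R(G^{\opp}))$. Given a diagram $\functor{D}$, form the $R$-module obtained as the direct sum of all the $\functor{D}_w$ with $w\in G^0_+$, the summand $\functor{D}_{v_+}$ being assigned to the infinite receiver $v$, and make the generators act as follows: each vertex $w\in G^0$ acts as the projection onto its part; each edge $e$ acts through $\functor{D}_e$ followed by the canonical inclusion into the coproduct; and each $e^*$ acts through the coordinate projection furnished by the coproduct condition at $\target(e)$, augmented at infinite receivers by the data carried on the $v_+$-summands via the edges $e_+$. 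Relations (1)–(4) are then formal consequences of the universal property of coproducts, relation (5) at a finite receiver $w$ holds because the finitely many coordinate projections onto the summands of $\functor{D}_w$ sum to the identity, and at an infinite receiver no such relation is imposed — matching precisely its absence in Definition \ref{def:LPA}. One must additionally verify that the module so produced satisfies $M=L_R(G^{\opp})M$.

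Finally I would supply the natural isomorphisms $\functor{G}\functor{F}\cong\id$ and $\functor{F}\functor{G}\cong\id$. Over the vertices $v\in G^0$ these are immediate from the tautological identifications $vM=\functor{D}_v$ and the universal property of coproducts, and naturality is a routine diagram chase. I expect the genuine difficulty to be concentrated entirely at the infinite receivers: the heart of the matter is to check that the freedom encoded by the objects $\functor{D}_{v_+}$ and the edges $e_+$ corresponds \emph{exactly} — neither more nor less — to the freedom in an $L_R(G^{\opp})$-module that is left undetermined by the missing instances of relation (5), so that $\functor{F}$ loses no module data and $\functor{G}$ manufactures none. This bookkeeping at the infinite receivers, rather than any single sharp estimate, is the main obstacle; away from them the argument is a direct transcription of the known row-finite equivalence.
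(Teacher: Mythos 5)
Your overall architecture (a functor in each direction, checked to be mutually quasi-inverse) is a legitimate repackaging of the paper's argument, which instead constructs the single functor $\Phi\colon\CKL_{\Mod(R)}(G_+)\to\Mod(L_R(G^{\opp}))$ --- your $\functor{G}$ --- and proves it full, faithful and essentially surjective; your $\functor{F}$ is the content of the essential-surjectivity step. The genuine gap is that your $\functor{F}$ is wrong at infinite receivers, and wrong in a way that no choice of $\functor{D}_{v_+}$ and $\functor{D}_{e_+}$ can repair. You set $\functor{D}_v:=vM$ for every $v\in G^0$. But the coproduct condition in $\CKL_{\Mod(R)}(G_+)$ at a non-source vertex $v$ asks that the \emph{incoming} edge maps exhibit $\functor{D}_v$ as $\bigoplus_{e\in\target^{-1}(v)}\bigl(\functor{D}_{\source(e)}\oplus\functor{D}_{\source(e)_+}\bigr)$, and the vertex $v_+$ contributes nothing to this condition: by Definition \ref{def:plus} the edges out of $v_+$ parallel the \emph{outgoing} edges of $v$ (each $e_+$ runs from $\source(e)_+$ to $\target(e)$), so $v_+$ never feeds into $v$ itself. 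Concretely, let $G$ have two vertices $u,v$ and infinitely many edges from $u$ to $v$. No edge of $G$ has an infinite receiver as its source, so $G_+$ adds only the isolated vertex $v_+$ and no new edges, and the coproduct condition at $v$ forces $\functor{D}_v\cong\bigoplus_{e\in\target^{-1}(v)}\functor{D}_u$ via multiplication. Now take $M=R$ with $v$ acting as the identity and $u$, all $e$, and all $e^*$ acting as zero: every relation of Definition \ref{def:LPA} holds (relation (5) is not imposed at the infinite receiver $v$) and $M=L_R(G^{\opp})M$, yet $vM=R$ while the image of $\bigoplus_{e}uM\to vM$ is $0$. So your diagram violates the coproduct condition at $v$, and nothing assigned to $v_+$ or to the (here nonexistent) edges $e_+$ can fix it.

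The repair is exactly what the paper does in its essential-surjectivity step, and it changes $\functor{D}_v$ itself: set $\functor{D}_v:=\tilde vM$, where $\tilde v(m)=vm$ if $v$ is not an infinite receiver and $\tilde v(m)=\sum_{e\in\target^{-1}(v)}ee^*m$ if it is (the hypothesis $M=L_R(G^{\opp})M$ guarantees that only finitely many terms are nonzero), and set $\functor{D}_{v_+}:=(v-\tilde v)M$, so that $vM=\functor{D}_v\oplus\functor{D}_{v_+}$. The maps $\functor{D}_e$ and $\functor{D}_{e_+}$ are multiplication by $e$ restricted to the two summands of $\source(e)M$. Relation (4) makes the incoming maps at $v$ jointly injective and the definition of $\tilde v$ makes them jointly surjective onto $\functor{D}_v$, which is precisely the coproduct condition at $v$; the residual piece $(v-\tilde v)M$, about which the missing instances of relation (5) say nothing, is stored at the source $v_+$, where no condition is imposed. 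Once $\functor{F}$ is corrected in this way, your $\functor{G}$ and the comparison of the two composites can be carried out as you describe; they amount to the paper's functor $\Phi$ and its fullness/faithfulness argument.
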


\begin{proof}
Let $\functor{D}$ be an object of $\CKL_{\Mod(R)}(G_+)$. For each $v\in G^0$ we have an $R$-module $\functor{D}_v$, and if $v$ receives infinitely many edges in $G$ then we also have a second $R$-module $\functor{D}_{v_+}$. Likewise, for each edge $e\in G^1$ we have an $R$-linear map $\functor{D}_e : \functor{D}_{\source(e)}\to \functor{D}_{\target(e)}$, and if $\source(e)$ receives infinitely many edges in $G$ then we have a second $R$-linear map $\functor{D}_{e_+}: \functor{D}_{\source(e)_+} \to \functor{D}_{\target(e)}$. To reduce the necessity to distinguish between the infinite receivers and non-infinite receivers, it is convenient to define $\functor{D}_{v_+}=0$ for all vertices $v\in G^0$ that do not receive infinitely many edges, and to define $\functor{D}_{e_+}$ to be the zero map $0: \functor{D}_{\source(e)_+}\to \functor{D}_{\target(e)}$ for all edges $e\in G^1$ for which $\source(e)$ does not receive infinitely many edges.

The coproduct condition on $\functor{D}$ means that for each vertex $v\in G^0$ that is not a source, the map
\begin{equation}\label{eq:M-directsum-iso}
\bigoplus_{e\in\target^{-1}(v)} (\functor{D}_e \oplus \functor{D}_{e_+}) : \bigoplus_{e\in \target^{-1}(v)} (\functor{D}_{\source(e)}\oplus \functor{D}_{\source(e)_+}) \to \functor{D}_v
\end{equation}
is  an isomorphism of $R$-modules. We let $\phi_v$ denote the inverse of this isomorphism, and for each $e\in G^1$ we let $\functor{D}_e^* : \functor{D}_{\target(e)}\to \functor{D}_{\source(e)}$ be the composition
\[
\functor{D}_e^*: \functor{D}_{\target(e)} \xrightarrow{\phi_{\target(e)}} \bigoplus_{f\in \target^{-1}\target(e)} (\functor{D}_{\source(f)}\oplus \functor{D}_{\source(f)_+}) \xrightarrow{\text{project}} \functor{D}_{\source(e)}.
\]
We define $\functor{D}_{e_+}^*:\functor{D}_{\target(e)}\to \functor{D}_{\source(e)_+}$ in a similar way, but projecting onto $\functor{D}_{\source(e)_+}$ instead of $\functor{D}_{\source(e)}$.

Now we associate to $\functor{D}$ a left module $M=\Phi(\functor{D})$ over $L_{R}(G^{\opp})$ as follows. We first consider the $R$-module
\[
M \coloneqq \bigoplus_{v\in G^0} \functor{D}_v \oplus \functor{D}_{v_+}.
\]
For each $v\in G^0$ we let $P_v : M\to M$ be map that is the identity on the summand $\functor{D}_v\oplus \functor{D}_{v_+}$ and zero on the summands $\functor{D}_w\oplus \functor{D}_{w_+}$ for $w\neq v$. For each $e\in G^1$ we let $A_e : M\to M$ be the $R$-linear map equal to $\functor{D}_e \oplus \functor{D}_{e_+} : \functor{D}_{\source(e)}\oplus \functor{D}_{\source(e)_+} \to \functor{D}_{\target(e)}$ on the direct summand $\functor{D}_{\source(e)}\oplus \functor{D}_{\source(e)_+}$ of $M$, and equal to zero on the other summands. For each $e\in G^1$ we also let $A_e^* : M\to M$ be the $R$-linear map equal to $\functor{D}_e^* \oplus \functor{D}_{e_+}^* : \functor{D}_{\target(e)} \to \functor{D}_{\source(e)}\oplus \functor{D}_{\source(e)_+}$ on the direct summand $\functor{D}_{\target(e)}$ of $M$, and equal to zero on the other direct summands.

We claim that the linear endomorphisms $P_v$,  $A_e$, and $A_e^*$ of the $R$-module $M$ satisfy the relations (1)--(5) of Definition \ref{def:LPA}; that is, we claim that those relations become equalities in $\End_R(M)$ upon replacing each $v$ by $P_v$, each $e$ by $A_e$, and each $e^*$ by $A_e^*$.

The relation (1) follows immediately from the definitions, since if $u\neq v$ then $P_u$ and $P_v$ project onto distinct direct summands in $M$. Relation (2) is  also a straightforward consequences of the definitions, since $A_e$ is supported on the image of $P_{\source(e)}$ and has image contained in the image of $P_{\target(e)}$, and both $P_{\source(e)}$ and $P_{\target(e)}$ act as the identity on their image. Similar considerations applied to $A_e^*$ show that relation (3) also holds.

Turning to (4), we first note that (1), (2), and (3) ensure that $A_e^* A_f =0$ unless $\target(e)=\target(f)$. Assuming that $\target(e)=\target(f)=v$, the map $A_e^* A_f$ is supported on $\functor{D}_{\source(f)}\oplus \functor{D}_{\source(f)_+}$, has image contained in $\functor{D}_{\source(e)}\oplus \functor{D}_{\source(e)_+}$, and is given by the composition
\begin{equation}\label{eq:FM-CK1}
\functor{D}_{\source(f)}\oplus \functor{D}_{\source(f)_+} \xrightarrow{\functor{D}_f \oplus \functor{D}_{f_+}} \functor{D}_{v} \xrightarrow{\phi_v} \bigoplus_{g\in\target^{-1}(v)} \functor{D}_{\source(g)} \oplus \functor{D}_{\source(g)_+} \xrightarrow{\text{project}} \functor{D}_{\source(e)}\oplus \functor{D}_{\source(e)_+}.
\end{equation}
The definition of $\phi_v$ ensures that the composition $\phi_v \circ (\functor{D}_{f}\oplus \functor{D}_{f_+})$ is the canonical embedding of $\functor{D}_{\source(f)}\oplus \functor{D}_{\source(f)_+}$ into $\bigoplus_{g\in \target^{-1}(v)} \functor{D}_{\source(g)}\oplus \functor{D}_{\source(g)_+}$, and this makes it clear that the composition \eqref{eq:FM-CK1} is the identity if $e=f$, and zero otherwise. Thus the relation (4) holds.

Finally considering the relation (5), let $v\in G^0$ be a vertex that is the target of a nonzero, finite number of edges in $G$. Note that we then have $\functor{D}_{v_+}=0$. The $R$-linear map $\sum_{e\in \target^{-1}(v)} A_e A_e^*:M\to M$ thus has support and image contained in the direct summand $\functor{D}_{v}$, and the definitions of $A_e$ and of $A_e^*$ show immediately that this map is the composition
\[
\functor{D}_v \xrightarrow{\phi_v} \bigoplus_{e\in \target^{-1}(v)} \functor{D}_{\source(e)}\oplus \functor{D}_{\source(e)_+} \xrightarrow{\bigoplus_{e\in \target^{-1}(v)} \functor{D}_e\oplus \functor{D}_e^*} \functor{D}_v,
\]
which is the identity on $\functor{D}_v$. Thus the relation (5) holds.

Now the universal property of the Leavitt path algebra $L_R(G^{\opp})$ gives a homomorphism $L_R(G^{\opp})\to \End_R(M)$, defined uniquely by $v\mapsto P_v$, $e\mapsto A_e$, and $e^*\mapsto A_e^*$ for $v\in G^0$ and $e\in G^1$, and we use this homomorphism to regard $M$ as an $L_R(G^{\opp})$-module. To see that this module satisfies $L_R(G^{\opp})M=M$, and hence is an object of $\Mod(L_R(G^{\opp}))$, note that $vm=m$ for all $m$ in the summand $\functor{D}_v\oplus \functor{D}_{v_+}$ of $M$.

To turn this construction $M\mapsto \Phi(M)$ into a functor from the category $\CKL_{\Mod(R)}(G_+)$ to $\Mod(L_R(G^{\opp}))$, we now define the action of $\Phi$ on morphisms. Given a morphism $T : \functor{D}\to \functor{E}$ in $\CKL_{\Mod(R)}(G_+)$, meaning a collection of $R$-linear maps $T_v : \functor{D}_v\to \functor{E}_v$ and $T_{v_+} : \functor{D}_{v_+} \to \functor{E}_{v_+}$ satisfying $T_{\target(e)} \functor{D}_e = \functor{E}_e T_{\source(e)}$ and $T_{\target(e)} \functor{D}_{e_+} = \functor{E}_{e_+} T_{\source(e)_+}$ for all $e\in G^1$, we define $\Phi(T) : \Phi(\functor{D})\to \Phi(\functor{E})$ to be the direct sum 
\[
\Phi(T)\coloneqq \bigoplus_{v\in G^0} T_v\oplus T_{v_+} : \bigoplus_{v\in G^0} \functor{D}_v\oplus \functor{D}_{v_+} \to \bigoplus_{v\in G^0} \functor{E}_v \oplus \functor{E}_{v_+}.
\]

We clearly have $\Phi(T)v = v\Phi(T)$ and $\Phi(T)e = e\Phi(T)$ for all $v\in G^0$ and all $e\in G^1$. To see that we also have $\Phi(T)e^* = e^* \Phi(T)$, first note that both sides have support in the direct summand $\functor{D}_{\target(e)}$ of $M$, and that the restrictions of these maps to this summand are equal to $(T_{\source(e)}\oplus T_{\source(e)_+})\circ(\functor{D}_{e}^*\oplus \functor{D}_{e_+}^*)$ and $(\functor{E}_e^* \oplus \functor{E}_{e_+}^*)\circ T_{\target(e)}$, respectively. Recalling the isomorphism \eqref{eq:M-directsum-iso}, we see that in order to prove that these two maps are equal, it will suffice to prove that 
\begin{equation}\label{eq:TFestar}
(T_{\source(e)}\oplus T_{\source(e)_+})\circ(\functor{D}_{e}^*\oplus \functor{D}_{e_+}^*) \circ (\functor{D}_f\oplus \functor{D}_{f_+})=(\functor{E}_e^* \oplus \functor{E}_{e_+}^*)\circ T_{\target(e)} \circ (\functor{D}_f\oplus \functor{D}_{f_+})
\end{equation}
for each $f\in \target^{-1}\target(e)$. The fact that the maps  $\functor{D}_{e}^*\oplus \functor{D}_{e_+}^*$ and $\functor{D}_f\oplus \functor{D}_{f_+}$ satisfy relation (4) from Definition \ref{def:LPA} ensures that the left-hand side in \eqref{eq:TFestar} is equal to $\delta_{e,f}(T_{\source(e)}\oplus T_{\source(e)_+})$. On the other hand, the right-hand side of \eqref{eq:TFestar} is
\[
\begin{aligned}
(\functor{E}_e^* \oplus \functor{E}_{e_+}^*)\circ T_{\target(e)} \circ (\functor{D}_f\oplus \functor{D}_{f_+}) & = (\functor{E}_e^* \oplus \functor{E}_{e_+}^*)(\functor{E}_f \oplus \functor{E}_{f_+})(T_{\source(f)}\oplus T_{\source(f)_+}) \\ & = \delta_{e,f} (T_{\source(f)}\oplus T_{\source(f)_+}),
\end{aligned}
\]
where the first equality holds because $T$ is a morphism of diagrams, and the second equality follows from the fact that $\functor{E}_{e}^*\oplus \functor{E}_{e_+}^*$ and $\functor{E}_f\oplus \functor{E}_{f_+}$ satisfy relation (4) from Definition \ref{def:LPA}. This establishes the equality \eqref{eq:TFestar}, and shows that our map $\Phi(T):\Phi(\functor{D})\to \Phi(\functor{E})$ is a map of $L_R(G^{\opp})$-modules.  It is clear from the definition that $\Phi$ preserves composition and identities, and so we have defined a functor $\Phi: \CKL_{\Mod(R)}(G_+)\to \Mod(L_R(G^{\opp}))$.

To show that $\Phi$ is an equivalence we must check that it is full (surjective on morphisms), faithful (injective on morphisms), and essentially surjective (surjective, up to isomorphism, on objects). To see that $\Phi$ is full, let $S : \Phi(\functor{D})\to \Phi(\functor{E})$ be a morphism of $L_R(G^{\opp})$-modules, for diagrams $\functor{D}$ and $\functor{E}$ in $\CKL_{\Mod(R)}(G_+)$. For each $v\in G^0$ we let $T_v : \functor{D}_v\to \functor{E}_v$ be the composition
\[
\functor{D}_v \xrightarrow{\text{inclusion}} \bigoplus_{u\in G^0} (\functor{D}_u\oplus \functor{D}_{u_+}) = \Phi(\functor{D}) \xrightarrow{S} \Phi(\functor{E}) = \bigoplus_{u\in G^0} (\functor{E}_u\oplus \functor{E}_{u_+}) \xrightarrow{\text{project}} \functor{E}_v.
\]
We define $T_{v_+}:\functor{D}_{v_+}\to \functor{E}_{v_+}$ analogously, by including $\functor{D}_{v_+}$ and projecting onto $\functor{E}_{v_+}$. The fact that $Se=eS$ for all $e\in G^1$ ensures that $T_{\target(e)}\functor{D}_e = \functor{E}_e T_{\source(e)}$ and that $T_{\target(e)}\functor{D}_{e_+} =\functor{E}_{e_+} T_{\source(e)_+}$, and so $T$ is a morphism in $\CKL_{\Mod(R)}(G_+)$. Consulting the definition of $\Phi$ on morphisms shows immediately that $\Phi(T)=S$.

To see that $\Phi$ is faithful, suppose that we have two morphisms $T,T':\functor{D}\to \functor{E}$ in $\CKL_{\Mod(R)}(G_+)$ with $\Phi(T)=\Phi(T')$. Then $\bigoplus_{v\in G^0}T_v\oplus T_{v_+}= \bigoplus_{v\in G^0} T'_v\oplus T'_{v_+}$ as maps $\bigoplus_{v\in G^0} \functor{D}_v\oplus \functor{D}_{v_+} \to \bigoplus_{v\in G^0} \functor{E}_v\oplus \functor{E}_{v_+}$, and restricting this equality to the individual summands gives $T_v=T'_v$ and $T_{v_+}=T'_{v_+}$ for each $v\in G^0$, and so $T=T'$ as morphisms of diagrams.

Finally, to show that $\Phi$ is essentially surjective, let $M$ be an $L_R(G^{\opp})$-module satisfying $M=L_R(G^{\opp})M$. For each $v\in G^0$ we define an $R$-linear map $\tilde{v}: M\to M$ as follows: 
\[
\tilde{v}(m) = \begin{cases} vm & \text{if $v$ is not an infinite receiver,}\\  \sum_{e\in \target^{-1}(v)} ee^*m & \text{if $v$ is an infinite receiver}.\end{cases}
\]
(Our assumption $M=L_R(G^{\opp})M$ ensures that for each $m\in M$ the sum on the right-hand side has only finitely many nonzero terms; cf. \cite[Lemma 1.2.12(v)]{Abrams-book}.) We then define $v_+\in \End_R(M)$ by $v_+(m)\coloneqq vm-\tilde{v}(m)$. 

Now we define a diagram $\functor{D}$ in $\Mod(R)$ of shape $G_+$ as follows. For each $v\in G^0$ we set $\functor{D}_v=\tilde{v}M$ and $\functor{D}_{v_+}=v_+M$. For each $e\in G^1$  we define $\functor{D}_e:\functor{D}_{\source(e)}\to \functor{D}_{\target(e)}$ to be the $R$-linear map $\widetilde{\source(e)}M\xrightarrow{m\mapsto em} \widetilde{\target(e)}M$, and we define  $\functor{D}_{e_+} : \functor{D}_{\source(e)_+}\to \functor{D}_{\target(e)}$ to be the map $\source(e)_+ M \xrightarrow{m\mapsto em} \widetilde{\target(e)}M$. For each $v\in G^0$ that is not a source, the relations in Definition \ref{def:LPA} together with the definition of $\tilde{v}$ ensure that the map
\[
\bigoplus_{e\in \target^{-1}(v)} e : \bigoplus_{e\in \target^{-1}(v)} \source(e)M \to \tilde{v} M
\]
is an isomorphism of $R$-modules. Writing $\source(e)M=\widetilde{\source(e)}M\oplus \source(e)_+ M$ and consulting the definitions of $\functor{D}_e$, $\functor{D}_{e_+}$, and $\functor{D}_v$ above, we see that the map
\[
\bigoplus_{\target(e)=v} \functor{D}_e \oplus \functor{D}_{e_+} : \bigoplus_{\target(e)=v} \functor{D}_{\source(e)}\oplus \functor{D}_{\source(e)_+} \to \functor{D}_v
\]
is an isomorphism, and so our diagram $\functor{D}$ satisfies the coproduct condition. Now we have $\Phi(\functor{D}) = \bigoplus_{v\in G^0} \tilde{v}M\oplus v_+ M = \bigoplus_{v\in G^0} vM$, and the summation map $\bigoplus_{v\in G^0} vM \to M$ is easily seen to be an isomorphism of $L_R(G^{\opp})$-modules $\Phi(\functor{D})\to M$. Thus $\Phi$ is essentially surjective, and this completes the proof that $\Phi$ is an equivalence. 
\end{proof}

\section{Flow equivalence}\label{sec:flow}

In this section we will show that many of the known results relating constructions on directed graphs to Morita equivalence of Leavitt path algebras (see, principally, \cite[Section 3]{Abrams-FE}) extend to the more general setting of the categories $\CKL_{\category{C}}(G)$, subject in some cases to finiteness conditions on $G$, and/or a requirement that $\category{C}$ admit certain coproducts. In Sections \ref{subsec:sinks}--\ref{subsec:in-split} we establish equivalence results of this kind, while in section \ref{subsec:desingularisation} we exhibit an example of a graphical construction that is known to yield Morita equivalent Leavitt path algebras, but which does not  lead to equivalences of the categories $\CKL_{\category{C}}$ in general, even when $\category{C}$ has all coproducts. In Section \ref{subsec:PSBF} we conclude the paper with some remarks relating the invariants of Parry-Sullivan and Bowen-Franks to the categories $\CKL_{\category{C}}(G)$.

We remind the reader that when comparing our results to the known theorems for Leavitt path algebras, one should keep in mind that the passage from diagrams to modules over Leavitt path algebras involves replacing $G$ by $G^{\opp}$. So for instance our Theorem \ref{thm:sinks} on sinks generalises \cite[Proposition 3.1]{Abrams-FE}, which concerns sources; our Theorem \ref{thm:out-delay} on out-delays generalises \cite[Theorem 3.6]{Abrams-FE}, which concerns  in-delays; and so on.

\subsection{Sink removal}\label{subsec:sinks}

\begin{definition}
Let $G=(G^0,G^1,\source,\target)$ be a directed graph. Recall that a vertex $w\in G^0$ is called a \emph{sink} if $\source^{-1}(w)=\emptyset$. If $w$ is a sink then we let $G-w$ be the directed graph with $(G-w)^0=G^0\setminus \{w\}$, $(G-w)^1 = G^1 \setminus \target^{-1}(w)$, and with source and target maps restricted from those of $G$.
\end{definition}

In words, $G-w$ is the graph obtained from $G$ by removing the sink $w$ along with all edges having target $w$.

\begin{theorem}\label{thm:sinks}
Let $G$ be a directed graph, and let $w\in G^0$ be a sink that is not a source. For each category $\category{C}$ with nonempty coproducts there is an equivalence of categories 
\(
\CKL_{\category{C}}(G) \cong \CKL_{\category{C}}(G-w).
\)
If in addition $w$ is not an infinite receiver, then the same is true for all categories $\category{C}$ with binary coproducts.
\end{theorem}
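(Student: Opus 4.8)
The plan is to construct mutually quasi-inverse functors between $\CKL_{\category{C}}(G)$ and $\CKL_{\category{C}}(G-w)$, the whole content of the theorem resting on a single observation: at a sink $w$ the value $\functor{D}_w$ is \emph{redundant} data. Indeed, since $w$ is a sink it is the source of no edge, so $\functor{D}_w$ never appears among the objects constrained by the coproduct condition at any other vertex; and since $w$ is not a source, the coproduct condition \emph{does} apply at $w$ itself, forcing $\functor{D}_w$ to be a coproduct of the family $\left(\functor{D}_{\source(e)}\mid e\in\target^{-1}(w)\right)$. Thus the data at $w$ is determined, up to canonical isomorphism, by the rest of the diagram, and this is precisely why the hypothesis that $w$ is not a source is essential.

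First I would define a restriction functor $\mathcal{R}\colon \CKL_{\category{C}}(G)\to \CKL_{\category{C}}(G-w)$ that forgets $\functor{D}_w$ together with the morphisms $\functor{D}_e$ for $e\in\target^{-1}(w)$, and acts on a morphism $T$ by discarding the component $T_w$. Because the only edges removed in passing from $G$ to $G-w$ are those with target $w$, for every vertex $v\neq w$ the set $\target^{-1}(v)$ is identical in the two graphs; hence $v$ is a source in one graph iff it is in the other, and the coproduct condition at $v$ in the restricted diagram is literally the coproduct condition at $v$ in the original. So $\mathcal{R}$ is well-defined. Next I would define an extension functor $\mathcal{I}\colon \CKL_{\category{C}}(G-w)\to \CKL_{\category{C}}(G)$ that leaves a diagram unchanged away from $w$ and sets $\functor{D}_w\coloneqq \bigsqcup_{e\in\target^{-1}(w)}\functor{D}_{\source(e)}$, with each $\functor{D}_e$ the canonical morphism into this coproduct, and sends a morphism $T$ to the morphism agreeing with $T$ away from $w$ and equal to $\bigsqcup_{e}T_{\source(e)}$ at $w$. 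This requires the coproduct to exist; as $\target^{-1}(w)$ is nonempty (because $w$ is not a source), it suffices that $\category{C}$ have nonempty coproducts, and if moreover $w$ is not an infinite receiver then $\target^{-1}(w)$ is finite, so binary coproducts suffice. This is exactly where the two hypotheses on $\category{C}$ enter.

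Finally I would verify that $\mathcal{R}$ and $\mathcal{I}$ are mutually quasi-inverse. The composite $\mathcal{R}\circ\mathcal{I}$ is the identity on the nose. The composite $\mathcal{I}\circ\mathcal{R}$ agrees with the identity except at $w$, where it replaces $\functor{D}_w$ by the chosen coproduct $\bigsqcup_{e}\functor{D}_{\source(e)}$; since the original $\functor{D}$ satisfies the coproduct condition at $w$, the induced morphism $\bigsqcup_{e}\functor{D}_e$ is a canonical isomorphism $\bigsqcup_{e}\functor{D}_{\source(e)}\xrightarrow{\cong}\functor{D}_w$, and I would take this, together with the identity at all other vertices, as the component $\eta_{\functor{D}}$ of a natural transformation $\mathcal{I}\circ\mathcal{R}\Rightarrow\id$.

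The only step requiring real care is checking that $\eta_{\functor{D}}$ is a morphism of diagrams and is natural in $\functor{D}$; I expect this to be the main (and essentially sole) obstacle, though a mild one. Both checks reduce to the defining property of the induced map on a coproduct. Since the only edges meeting $w$ are the incoming ones $e\in\target^{-1}(w)$, compatibility of $\eta_{\functor{D}}$ with the edge morphisms at $w$ amounts to the identity $\left(\bigsqcup_{f}\functor{D}_f\right)\circ\iota_e=\functor{D}_e$, where $\iota_e$ is the canonical inclusion, which is immediate; and naturality in $\functor{D}$ follows from the uniqueness clause in the universal property, since for a morphism $T$ both $T_w\circ\bigl(\bigsqcup_f\functor{D}_f\bigr)$ and $\bigl(\bigsqcup_f\functor{E}_f\bigr)\circ\bigl(\bigsqcup_f T_{\source(f)}\bigr)$ are the morphism out of the coproduct induced by the common family $\left(T_w\circ\functor{D}_f=\functor{E}_f\circ T_{\source(f)}\right)$. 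With these routine verifications in place, $\mathcal{R}$ and $\mathcal{I}$ exhibit the desired equivalence in both the nonempty-coproduct and the binary-coproduct cases.
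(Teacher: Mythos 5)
Your proposal is correct and takes essentially the same approach as the paper: the paper's proof uses the identical restriction functor (deleting $\functor{D}_w$ and the morphisms into it) and the identical coproduct extension $\widetilde{\functor{E}}_w \coloneqq \bigsqcup_{e\in\target^{-1}(w)}\functor{E}_{\source(e)}$, resting on the same observation that the coproduct condition at the non-source sink $w$ makes the data there redundant, with the hypotheses on $\category{C}$ entering exactly where you place them. The only difference is packaging—the paper certifies the equivalence by showing the restriction functor is full, faithful, and essentially surjective (using the universal property at $w$ to uniquely reconstruct $T_w$), whereas you exhibit an explicit quasi-inverse and natural isomorphism, which amounts to the same universal-property verifications.
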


\begin{proof}
For each object $\functor{D}$ of $\CKL_{\category{C}}(G)$ we let $\Phi (\functor{D})$ be the object of $\CKL_{\category{C}}(G-w)$ defined by
\[
\Phi (\functor{D})_v \coloneqq \functor{D}_v \quad \text{and}\quad \Phi (\functor{D})_e \coloneqq \functor{D}_e
\]
for all $v\in (G-w)^0$ and all $e\in (G-w)^1$. That is, $\Phi (\functor{D})$ is the diagram obtained from the diagram $\functor{D}$ by deleting the object $\functor{D}_w$ and all morphisms mapping into $\functor{D}_w$. This new diagram clearly inherits the coproduct condition from the diagram $\functor{D}$. Similarly, for each morphism $T:\functor{D}\to \functor{E}$ in $\CKL_{\category{C}}(G)$ we let $\Phi (T):\Phi (\functor{D}) \to \Phi (\functor{E})$ be defined by 
\[
\Phi (T)_v \coloneqq T_v
\]
for all $v\in (G-w)^0$. The fact that $\Phi (T)$ is a morphism of diagrams follows immediately from the corresponding property of $T$, while the fact that $\Phi$ preserves identities and commutes with composition is obvious from the definition. Thus $\Phi $ is a functor. 

To see that $\Phi $ is full and faithful, let $\functor{D}$ and $\functor{E}$ be two diagrams in $\CKL_{\category{C}}(G)$, and let $S:\Phi (\functor{D}) \to \Phi (\functor{E})$ be a morphism in $\CKL_{\category{C}}(G-w)$. Since $w$ is not a source the diagram $\functor{D}$ satisfies the coproduct condition at $w$, and hence there is a unique morphism $T_w : \functor{D}_w \to \functor{E}_w$ making the diagram
\[
\xymatrix{
\functor{D}_{\source(e)} \ar[r]^-{\functor{D}_e} \ar[d]_{S_{\source(e)}} & \functor{D}_w \ar[d]^-{T_w} \\
\functor{E}_{\source(e)} \ar[r]^-{\functor{E}_e} & \functor{E}_w
}
\]
commute for each $e\in \target^{-1}(w)$. Setting $T_v\coloneqq S_v$ for $v\in G^0\setminus \{w\}$ then shows that there is a unique morphism $T:\functor{D}\to \functor{E}$ in $\CKL_{\category{C}}(G)$ with $\Phi (T)=S$.

Finally, to see that $\Phi$ is essentially surjective note that each diagram $\functor{E}$ in $\CKL_{\category{C}}(G-w)$ can be extended to a diagram $\widetilde{\functor{E}}$ in $\CKL_{\category{C}}(G)$ by setting $\widetilde{\functor{E}}_w \coloneqq \bigsqcup_{e\in \target^{-1}(w)} \functor{E}_{\source(e)}$ (which we can do since $\target^{-1}(w)$ is a nonempty set and $\category{C}$ has nonempty coproducts), and then letting $\widetilde{\functor{E}}_e : \functor{E}_{\source(e)} \to \functor{E}_w$ be the canonical mapping into the coproduct for each $e\in \target^{-1}(w)$. Then $\Phi (\widetilde{\functor{E}})=\functor{E}$, so $\Phi $ is an equivalence. If $w$ is not an infinite receiver then the coproduct that we formed when defining $\widetilde{\functor{E}}$ is over a finite set, and so $\category{C}$ need only admit binary coproducts.
\end{proof}

Theorem \ref{thm:sinks} gives the following generalisation of a well-known fact about finite-dimensional Leavitt path algebras (see e.g. \cite{Abrams-finite}). 

\begin{corollary}\label{cor:acyclic}
Let $G$ be an acyclic directed graph with only finitely many vertices, and let $n$ be the number of sources in $G^0$. For each category $\category{C}$ with nonempty coproducts we have an equivalence $\CKL_{\category{C}}(G) \cong \category{C}^n$.
If in addition $G$ has finitely many edges, then the same holds for all categories $\category{C}$ with binary coproducts.
\end{corollary}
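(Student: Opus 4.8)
The plan is to prove the equivalence by induction on the number of vertices $\#G^0$, repeatedly applying the sink-removal equivalence of Theorem \ref{thm:sinks} to strip $G$ down to a graph with no edges. If $G$ has no edges then every vertex is a source, so $n=\#G^0$ and Example \ref{ex:no-edges} gives $\CKL_{\category{C}}(G)\cong\category{C}^{\#G^0}=\category{C}^n$ with no coproduct hypothesis required; this serves as the terminal case. Otherwise I will produce a sink $w$ that is not a source, apply Theorem \ref{thm:sinks} to get $\CKL_{\category{C}}(G)\cong\CKL_{\category{C}}(G-w)$, and invoke the inductive hypothesis for the graph $G-w$, which has one fewer vertex.

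Two observations make this work. First, a finite acyclic graph with at least one edge always has a non-source sink: beginning from any edge $e_0$, set $w_0=\target(e_0)$ and follow outgoing edges $w_0,w_1,w_2,\dots$ for as long as possible; acyclicity forces the $w_i$ to be pairwise distinct (a repetition would produce a directed cycle), so finiteness of $G^0$ makes the process terminate at a vertex $w$ with no outgoing edges, and $w$, being the target of the last edge traversed, is not a source. Second, deleting such a $w$ preserves the number of sources: the edges removed in passing from $G$ to $G-w$ all have target $w$, so for any $v\neq w$ the edges into $v$ are unaffected and $v$ is a source of $G-w$ if and only if it is a source of $G$; since $w$ is itself not a source, $G$ and $G-w$ have the same number $n$ of sources. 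As $G-w$ is again finite and acyclic, the inductive hypothesis applies and yields $\CKL_{\category{C}}(G-w)\cong\category{C}^n$.

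Finally I would track the coproduct hypotheses. The only coproduct formed in a single reduction step is the one indexed by $\target^{-1}(w)$ inside Theorem \ref{thm:sinks}, so nonempty coproducts in $\category{C}$ suffice for the first assertion. For the second assertion, if $G$ has finitely many edges then so does every graph arising in the reduction, hence none of the sinks $w$ removed is an infinite receiver, and the binary-coproduct clause of Theorem \ref{thm:sinks} applies at each step; the terminal no-edges case needs no coproducts whatsoever. The one step that is not pure bookkeeping is the existence of a non-source sink, i.e.\ the standard fact that a finite acyclic graph has a maximal vertex; I expect this---though entirely elementary---to be the crux, with the remainder of the argument being routine verification that the count of sources is an invariant of the process.
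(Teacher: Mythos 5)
Your proposal is correct and follows essentially the same route as the paper: reduce to the edgeless case of Example \ref{ex:no-edges} by repeatedly removing a non-source sink via Theorem \ref{thm:sinks}, inducting on the number of vertices. The details you supply (existence of a non-source sink in a finite acyclic graph with an edge, invariance of the source count under sink removal, and the bookkeeping of coproduct hypotheses) are exactly the ones the paper leaves implicit, and they are all correct.
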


(A directed graph is called \emph{acyclic} if it has no directed cycles, the latter meaning a nonempty sequence of edges $(e_1,e_2,\ldots, e_n)$ with $\target(e_i)=\source(e_{i+1})$ for $i=1,\ldots,n-1$, and $\target(e_n)=\source(e_1)$.)

\begin{proof}
If $G$ has no edges then we are in the setting of Example \ref{ex:no-edges}. If $G$ has an edge then, since $G$ has only finitely many vertices and no cycles, $G$ has at least one sink $w$ that is not a source. Using Theorem \ref{thm:sinks} to remove $w$ we obtain a graph with fewer vertices whose $\CKL_{\category{C}}$ category is equivalent to that of $G$, and now the corollary follows by induction.
\end{proof}

Corollary \ref{cor:acyclic} leads to a simple description of  $\CKL_{\category{P}}(G)$ when the directed graph $G$ has only finitely many vertices and when $\category{P}$ is the category associated to a partially ordered set $(P,\leq$). A subset $X$ of $G^0$ is called \emph{irreducible} if for each ordered pair of distinct vertices $(v,w)\in X^2$ there is a directed path in $G$ with source $v$ and target $w$ (that is, there is a sequence of edges $(e_1,\ldots,e_n)$ with $\source(e_1)=v$, $\target(e_i)=\source(e_{i+1})$ for $i=1,\ldots,n-1$, and $\target(e_n)=w$). A subset $X$ of $G^0$ is called \emph{cohereditary} if for each $e\in G^1$ with $\target(e)\in X$ we have $\source(e)\in X$.

\begin{corollary}\label{cor:P-finite}
If $G$ is a directed graph with finitely many vertices, and $\category{P}$ is the category associated to a partially ordered set $(P,\leq)$ in which every finite subset has a supremum, then $\CKL_{\category{P}}(G)$ is equivalent to $\category{P}^m$, where $m$ is the number of cohereditary irreducible subsets of $G^0$.
\end{corollary}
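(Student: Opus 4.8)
The plan is to work entirely inside the concrete description of $\CKL_{\category{P}}(G)$ furnished by Example \ref{ex:CKL_P}. There an object is an order-preserving map $\functor{D}\colon G^0\to P$ (that is, $\functor{D}_{\source(e)}\le \functor{D}_{\target(e)}$ for every edge) subject to the condition that $\functor{D}_v=\sup\{\functor{D}_{\source(e)}\mid e\in\target^{-1}(v)\}$ at each vertex $v$ that is not a source, and a morphism is just a pointwise inequality. Thus $\CKL_{\category{P}}(G)$ is itself a poset, and since a poset viewed as a category is skeletal, an equivalence $\CKL_{\category{P}}(G)\cong \category{P}^m$ is literally an isomorphism of posets onto $P^m$; it is such an isomorphism that I would build. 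Using the last remark of Example \ref{ex:CKL_P}, I would first replace $G$ by the simple graph with the same vertices and a single edge $u\to v$ whenever $G$ has at least one, which leaves $\CKL_{\category{P}}(G)$ unchanged and lets me speak of the in-neighbours of a vertex.

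Next I would record two structural facts. First, any diagram $\functor{D}$ is constant on each strongly connected component of $G$: if two vertices reach one another then the order-preserving property forces their values to be equal. Second, I would identify the cohereditary irreducible subsets explicitly. Irreducibility forces such a subset $X$ to lie inside a single strongly connected component, while coherediticity (no edge enters $X$ from outside) then forces $X$ to be an entire component receiving no edges from any other component; conversely each such ``source component'' is cohereditary and irreducible. Hence the $m$ cohereditary irreducible subsets are pairwise disjoint, and on each of them a diagram $\functor{D}$ takes a single value by the first fact. This gives a candidate functor $\Phi\colon \CKL_{\category{P}}(G)\to\category{P}^m$ sending $\functor{D}$ to the tuple of its values on these $m$ subsets.

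For the inverse I would use propagation along reachability. Given a tuple $(p_X)_X$ indexed by the cohereditary irreducible subsets, define a diagram $\Psi(p)$ by $\Psi(p)_v\coloneqq \sup\{\,p_X \mid \text{some (equivalently every) vertex of $X$ admits a directed path to $v$}\,\}$. The index set here is finite and, since tracing predecessors back in a finite graph always terminates at a source component, it is nonempty, so the supremum exists by hypothesis on $P$. One checks readily that $\Psi(p)$ is order-preserving, that it satisfies the coproduct condition at every non-source, and that $\Phi$ and $\Psi$ are monotone; that $\Phi\Psi=\id$ is immediate because each cohereditary irreducible $X$ is a source component, reached only from within itself.

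The main obstacle is the identity $\Psi\Phi=\id$, i.e.\ showing that the coproduct condition forces $\functor{D}_v=\sup\{\functor{D}_X \mid X \text{ reaches } v\}$ for \emph{every} diagram $\functor{D}$. The natural attack is induction over the acyclic condensation of $G$: at a source component the claim is the defining constancy, and at a non-source vertex the value is the supremum of its in-neighbours, whose values are controlled by the inductive hypothesis. The delicate case—and the one the acyclic Corollary \ref{cor:acyclic} never encounters—is a vertex lying on, or downstream of, a directed cycle, that is, a nontrivial strongly connected component that is not itself a source component. There one must verify that the common value of $\functor{D}$ on such a component is pinned down by the data entering from outside, contributing no extra degree of freedom beyond the values of the upstream cohereditary irreducible subsets; this is precisely the point on which the whole isomorphism with $\category{P}^m$ rests, and I would expect the bulk of the work, and any hypotheses needed to make the sink-removal reduction of Corollary \ref{cor:acyclic} applicable, to be concentrated here.
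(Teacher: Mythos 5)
Your reduction to the poset picture of Example \ref{ex:CKL_P}, the identification of the cohereditary irreducible subsets with the source components of the condensation, the constructions of $\Phi$ and $\Psi$, and the verification $\Phi\Psi=\id$ are all correct, and up to that point your argument runs parallel to the paper's (which condenses $G$ to an acyclic graph $\widetilde{G}$ on the maximal irreducible sets and invokes Corollary \ref{cor:acyclic}). But the step you defer, $\Psi\Phi=\id$, is not merely where the bulk of the work lies: it is false, and no argument can close the gap. The mechanism is visible in your own set-up. If $v$ lies on a directed cycle, then some edge $e\in\target^{-1}(v)$ has $\source(e)$ in the same strongly connected component as $v$, so $\functor{D}_{\source(e)}=\functor{D}_v$; the coproduct condition at $v$ therefore reads $\functor{D}_v=\sup(\{\functor{D}_v\}\cup S)$, where $S$ is the set of values arriving from other components, and for a monotone $\functor{D}$ this holds automatically---it asserts only the inequality $\functor{D}_v\geq\sup S$ and pins down nothing. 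Concretely, let $G$ have vertices $u,v$ and edges $u\to v$ and $v\to v$, and let $P=\{0<1\}$. Every monotone assignment with $\functor{D}_u\leq\functor{D}_v$ satisfies the coproduct condition, so $\CKL_{\category{P}}(G)\cong\category{Arr}(\category{P})$ is a three-element chain, while the unique cohereditary irreducible subset is $\{u\}$, so $m=1$ and $\category{P}^m$ is a two-element chain; your $\Psi\Phi$ sends the object $(\functor{D}_u,\functor{D}_v)=(0,1)$ to $(0,0)$.

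Since the rest of your argument is sound, what you have in effect produced is a counterexample: the Corollary is false as stated whenever $G$ has a directed cycle lying in a non-cohereditary component. Your remark that this step is ``precisely the point on which the whole isomorphism with $\category{P}^m$ rests'' is exactly right, and it is also precisely where the paper's own proof goes astray: in \eqref{eq:P-cor} the coproduct condition at $X_v$ in $\widetilde{G}$ is written as a supremum over \emph{all} edges of $G$ with target in $X_v$, including the edges internal to $X_v$; but $\widetilde{G}$ has no self-loops, so the correct condition ranges only over edges entering $X_v$ from outside, and that stronger condition is not implied by the coproduct condition in $G$ (in the example above it would force $\functor{D}_v=\functor{D}_u$, so the paper's functor $\Phi$ does not even land in $\CKL_{\category{P}}(\widetilde{G})$). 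The statement does hold under the additional hypothesis that every strongly connected component of $G$ containing an edge is cohereditary, which covers both the acyclic case (Corollary \ref{cor:acyclic}) and the irreducible case (Example \ref{ex:Cuntz-P}); in general, $\CKL_{\category{P}}(G)$ is the poset of monotone $P$-valued functions on the set of those components that are either cohereditary or contain a cycle, ordered by reachability, and your $\Phi$ and $\Psi$, enlarged to keep track of the values on all such components, prove exactly that.
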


\begin{proof}
Since the union of two irreducible sets sharing a common vertex is again irreducible, each vertex $v\in G^0$ is contained in a unique maximal irreducible set $X_v\subseteq G^0$. Let $\widetilde{G}$ be the directed graph with vertex set $\{ X_v\ |\ v\in G^0\}$, such that for each pair of vertices $v,w\in G^0$ there is a single edge in $\widetilde{G}$ with source $X_v$ and target $X_w$ if and only if $X_v\neq X_w$ and there exists an edge $e\in G^1$ with $\source(e)\in X_v$ and $\target(e)\in X_w$. The fact that each $X_v$ is maximal irreducible ensures that the graph $\widetilde{G}$ is acyclic, and the sources in $\widetilde{G}$ are those $X_v$ that are cohereditary in $G$. Noting that a cohereditary irreducible subset of $G^0$ is automatically maximal irreducible, and that the graph $\widetilde{G}$ has only finitely many edges, we conclude from Corollary \ref{cor:acyclic} that $\CKL_{\category{P}}(\widetilde{G})\cong \category{P}^m$, where $m$ is the number of cohereditary irreducible subsets of $G^0$.

It now remains to show that $\CKL_{\category{P}}(G)\cong \CKL_{\category{P}}(\widetilde{G})$. We construct a functor $\Phi: \CKL_{\category{P}}(G)\to \CKL_{\category{P}}(\widetilde{G})$ by defining, for each diagram $\functor{D}$ in $\CKL_{\category{P}}(G)$ and each vertex $X_v\in \widetilde{G}_0$,  $\Phi(\functor{D})_{X_v}\coloneqq \functor{D}_v$. This is well defined because if $X_w= X_v$ then there are directed paths in $G$ from $v$ to $w$ and from $w$ to $v$, and so the fact that $\functor{D}$ is a diagram in $\category{P}$ ensures that $\functor{D}_v\leq \functor{D}_w \leq \functor{D}_v$. Since $\category{P}$ has at most one morphism between any pair of objects we do not need to define $\Phi(\functor{D})$ on the edges of $\widetilde{G}$, but instead we must check that if there is an edge in $\widetilde{G}$ from $X_w$ to $X_v$ then $\Phi(\functor{D})_{X_w} \leq \Phi(\functor{D})_{X_v}$. This is the case because there is a directed path in $G$ from $w$ to $v$, and so we have $\functor{D}_w\leq \functor{D}_v$. 

To check that $\Phi(\functor{D})$ satisfies the coproduct condition, fix a vertex $v\in G^0$ such that $X_v$ is not a source in $\widetilde{G}$; this ensures in particular that $v$ is not a source in $G$. The coproduct condition at $X_v$ is the requirement that 
\begin{equation}\label{eq:P-cor}
\functor{D}_v = \sup\{ \functor{D}_{\source(e)}\ |\ e\in G^1,\ \target(e)\in X_v\}.
\end{equation} 
We observed above that $\functor{D}_v \geq \functor{D}_{\source(e)}$ whenever $\target(e)\in X_v$, so $\functor{D}_v$ is an upper bound for the set on the right in \eqref{eq:P-cor}. On the other hand, the coproduct condition on $\functor{D}$ ensures that $\functor{D}_v$ is the supremum of the set $\{\functor{D}_{\source(e)}\ |\ e\in G^1,\ \target(e)=v\}$, which is a subset of the right-hand side of \eqref{eq:P-cor}, and so \eqref{eq:P-cor} holds.

The map $\Phi$ is a full and faithful functor $\CKL_{\category{P}}(G)\to \CKL_{\category{P}}(\widetilde{G})$ (in other words, $\Phi$ is an injective mapping of partially ordered sets), because for diagrams $\functor{D}$ and $\functor{E}$ in $\CKL_{\category{P}}(G)$ the definition of $\Phi$ implies immediately that for each $v\in G^0$ we have $\functor{D}_v\leq \functor{E}_v$ if and only if $\Phi(\functor{D})_{X_v}\leq \Phi(\functor{E})_{X_v}$. 

Finally, to show that $\Phi$ is essentially surjective, let $\functor{E}$ be a diagram in $\CKL_{\category{P}}(\widetilde{G})$, and let $\functor{D}$ be the diagram in $\category{P}$ of shape $G$ given by $\functor{D}_v\coloneqq \functor{E}_{X_v}$ for each $v\in G^0$. This does define a diagram because for each edge $e\in G^1$ there is an edge in $\widetilde{G}$ with source $X_{\source(e)}$ and target $X_{\target(e)}$, and consequently $\functor{D}_{\source(e)}=\functor{E}_{\source(e)} \leq \functor{E}_{\target(e)} =\functor{D}_{\target(e)}$.  To show that $\functor{D}$ satisfies the coproduct condition, we note that this condition requires that for each $v\in G^0$ we have
\begin{equation}\label{eq:P-cor2}
\functor{E}_{X_v} = \sup\{ \functor{E}_{X_{\source(e)}}\ |\ e\in G^1,\ \target(e)=v\}.
\end{equation}
The fact that $\functor{E}$ satisfies the coproduct condition ensures that
\begin{equation}\label{eq:P-cor3}
\functor{E}_{X_v} = \sup\{ \functor{E}_{X_{\source(f)}}\ |\ f\in G^1,\ \target(f)\in X_v\},
\end{equation}
and since the set on the right-hand side of \eqref{eq:P-cor3} contains the set on the right-hand side of \eqref{eq:P-cor2} we see that $\functor{E}_{X_v}$ is an upper bound for the set on the right-hand side in \eqref{eq:P-cor2}. On the other hand, for each edge $f\in G^1$ with $\target(f)\in X_v$ there is an edge $e\in G^1$ with $\target(e)=v$ such that $\functor{E}_{X_{\source(f)}}\leq \functor{E}_{X_{\source(e)}}$: indeed if $\target(f)\in X_v$ then $f$ is the first edge in a directed path in $G$ with source $\source(f)$ and target $v$, and we can take $e$ to be the final edge in such a path. Thus every element of the set on the right-hand side of \eqref{eq:P-cor3} is dominated by some element of the set on the right-hand side of \eqref{eq:P-cor2}, and we conclude that both sets have the same supremum, $\functor{E}_{X_v}$. Thus $\functor{D}$ satisfies the coproduct condition, and this completes the proof.
\end{proof}

\begin{example}\label{ex:Cuntz-P}
If $G$ is a finite irreducible graph (meaning that the entire vertex-set $G^0$ is irreducible in the sense defined above) then we have $\CKL_{\category{P}}(G)\cong \category{P}$ for every category $\category{P}$ associated to a partially ordered set. (No hypothesis need be made here on the existence of suprema in $\category{P}$, because the graph $\widetilde{G}$ appearing in the proof of Corollary \ref{cor:P-finite} consists in this case of a single vertex and no edges.) For example, the graphs $G$ and $H$ from \eqref{eq:intro-Cuntz} satisfy $\CKL_{\category{P}}(G)\cong \CKL_{\category{P}}(H)$ for every partially ordered set $\category{P}$, and this gives one answer to part (3) of Question \ref{q:Cuntz}.
\end{example}

\begin{remark}\label{remark:P-infinite}
For graphs $G$ with infinitely many vertices it is not necessarily true that $\CKL_{\category{P}}(G)$ is equivalent to a product of copies of $\category{P}$, even if we assume $\category{P}$ to have all coproducts. For example, let $\category{P}$ be the category associated to a partially ordered set $(P,\leq)$, and let $G$ be the graph 
\[
\xygraph{
[u(.5)]{w_0}="y"([d]{v}="x",[r(1.5)]{w_1}="b1" [r(1.5)]{w_2}="b2" [r(1.5)]{w_3}="b3" [r(1.5)]{\cdots}="dots"), "x":"y", "x":"b1":"y", "x":"b2":"b1", "x":"b3":"b2", "x":_-*{\cdots}"dots":"b3"
}
\]
where the path of $w$s extends infinitely to the right, each $w_i$ being the target of one edge with source $v$. For each diagram $\functor{D}$ in $\CKL_{\category{P}}(G)$ and each $i\geq 0$ we have $\functor{D}_{w_i}=\sup\{ \functor{D}_v, \functor{D}_{w_{i+1}}\}$ and $\functor{D}_v\leq \functor{D}_{w_{i+1}}$, giving $\functor{D}_{w_0}=\functor{D}_{w_i}$ for all $i$. Thus the map $\functor{D}\mapsto (\functor{D}_v ,\functor{D}_{w_0})$ gives an equivalence $\CKL_{\category{P}}(G)\to \category{Arr}(\category{P})$, the arrow category of $\category{P}$ (i.e., the category associated to the subset $\{ (x,y) \ |\ x\leq y\}$ of the partially ordered set $P^2$). It is easy to come up with examples of $\category{P}$s for which $\category{Arr}(\category{P})$ is not equivalent to any power of $\category{P}$: for example, take $P=\{0,1\}$ with $0\leq 1$.
\end{remark}

\subsection{Out-delays}

\begin{definition}\label{def:out-delay}
Let $G=(G^0,G^1,\source,\target)$ be a directed graph, and let $d:G^0\sqcup G^1\to \N\sqcup\{\infty\}$ be a function such that for each edge $e\in G^1$ we have $d(e)<\infty$ and $d(e)\leq d(\source(e))$. (In this paper $\N=\{0,1,2,\ldots\}$, and $n< \infty$ for every $n\in \N$.) We define a new graph $G_{\mathrm{od},d}=(G_{\mathrm{od},d}^0, G_{\mathrm{od},d}^1, \source_{\mathrm{od},d}, \target_{\mathrm{od},d})$, called the \emph{out-delay} of $G$ determined by $d$, as follows:
\[
\begin{aligned}
& G^0_{\mathrm{od},d} = \{ (v,n)\ |\ v\in G^0, n\in \N,\ 0\leq n\leq d(v)\} \\
& G^1_{\mathrm{od},d} = G^1 \sqcup \{ e_{v,n}\ |\ v\in G^0, n\in \N,\ 1\leq n\leq d(v)\} \\
& \source_{\mathrm{od},d}(e) = (\source(e), d(e)), \quad \source_{\mathrm{od},d}(e_{v,n})=(v,n-1)
\\
& \target_{\mathrm{od},d}(e) = (\target(e),0), \qquad \ \,\target_{\mathrm{od},d}(e_{v,n})=(v,n).
\end{aligned}
\]
\end{definition}

\begin{example}\label{ex:out-delay}
If $G$ is the directed graph
\[
\xygraph{
{v}="v":^-{e}@/^/[r] {w}="w":^-{f}@/^/"v","w":^{g}@(ur,dr)"w"
}
\]
and $d:G^0\sqcup G^1 \to \N\sqcup\{\infty\}$ is the function
\[
d(v)=d(e)=0,\quad d(w)=\infty,\quad d(f)=1,\quad d(g)=2,
\]
then $G_{\mathrm{od},d}$ is the directed graph
\[
\xygraph{
{(v,0)}="v0":^-{e}@/^/[r(1.5)]{(w,0)}="w0":^-{e_{w,1}}[r(1.5)]{(w,1)}="w1"
(:^-{f}@/^10pt/"v0",:^-{e_{w,2}}[r(1.5)]{(w,2)}="w2"(:_-{g}@/_20pt/"w0",:^-{e_{w,3}}[r(1.5)]{(w,3)} :^-{e_{w,4}}[r(1.5)]{\cdots}
)
)
}
\]
where the path of $(w,n)$s continues infinitely to the right.
\end{example}

\begin{example}\label{ex:tails}
Let $G=(G^0,G^1,\source,\target)$ be a directed graph, and let $d:G^0\sqcup G^1 \to \N\sqcup\{\infty\}$ be the function defined for $v\in G^0$ and $e\in G^1$ by
\[
d(v) = \begin{cases} 0 & \text{if $v$ is not a sink}\\ \infty & \text{if $v$ is a sink}\end{cases}
\qquad \text{and}\qquad d(e)=0.
\]
The graph $G_{\mathrm{od},d}$ is obtained from $G$ by ``adding a tail'' at each sink; for example:
\[
G= \quad \xygraph{
{\closed}="sink"([l][u(.2)]{\closed}="u":"sink",[l][d(0.2)]{\closed}="b":"sink")
}
\qquad
\Longrightarrow
\qquad
G_{\mathrm{od},d}= \quad \xygraph{
{\closed}="sink"([l][u(.2)]{\closed}="u":"sink",[l][d(0.2)]{\closed}="b":"sink",:[r]{\closed}:[r]{\closed}:[r]{\cdots})
}
\]
\end{example}

\begin{theorem}\label{thm:out-delay}
Let $G_{\mathrm{od},d}$ be an out-delay of a directed graph $G$, as in Definition \ref{def:out-delay}. For each category $\category{C}$ there is an equivalence of categories 
\(
\CKL_{\category{C}}(G) \cong \CKL_{\category{C}}(G_{\mathrm{od},d}).
\)
\end{theorem}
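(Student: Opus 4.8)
The plan is to build an explicit equivalence by \emph{contracting} the tails $(v,0)\to(v,1)\to\cdots$ that the out-delay inserts, exploiting the fact that every newly created vertex receives exactly one edge. First I would record the shape of the coproduct condition on $G_{\mathrm{od},d}$. For $1\le n\le d(v)$ the vertex $(v,n)$ is the target of the single edge $e_{v,n}$, so by Example \ref{ex:one-edge} the coproduct condition there forces $\functor{D}_{e_{v,n}}\colon \functor{D}_{(v,n-1)}\to \functor{D}_{(v,n)}$ to be an isomorphism; writing $\iota^{\functor{D}}_{v,n}\coloneqq \functor{D}_{e_{v,n}}\circ\cdots\circ\functor{D}_{e_{v,1}}$ (with $\iota^{\functor{D}}_{v,0}=\id$) gives a chain of isomorphisms $\iota^{\functor{D}}_{v,n}\colon \functor{D}_{(v,0)}\to\functor{D}_{(v,n)}$ defined for all $0\le n\le d(v)$. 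The remaining vertices are those of the form $(v,0)$; such a vertex is a source in $G_{\mathrm{od},d}$ exactly when $v$ is a source in $G$, and its incoming edges are precisely the original edges $e\in\target^{-1}(v)$, now regarded as having source $(\source(e),d(e))$.

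Next I would define the contraction functor $\Phi\colon \CKL_{\category{C}}(G_{\mathrm{od},d})\to\CKL_{\category{C}}(G)$ by $\Phi(\functor{D})_v\coloneqq \functor{D}_{(v,0)}$ and $\Phi(\functor{D})_e\coloneqq \functor{D}_e\circ\iota^{\functor{D}}_{\source(e),d(e)}$, the composite being defined precisely because the hypothesis $d(e)\le d(\source(e))$ guarantees that $(\source(e),n)$ is a vertex for every $0\le n\le d(e)$; on morphisms I set $\Phi(T)_v\coloneqq T_{(v,0)}$. The one substantive check is that $\Phi(\functor{D})$ again satisfies the coproduct condition: at a non-source $v$ of $G$ the object $\functor{D}_{(v,0)}$ is a coproduct of the family $\bigl(\functor{D}_{(\source(e),d(e))}\ \big|\ e\in\target^{-1}(v)\bigr)$ via the maps $\functor{D}_e$, and precomposing each $\functor{D}_e$ with the isomorphism $\iota^{\functor{D}}_{\source(e),d(e)}$ yields a coproduct cone on $\bigl(\Phi(\functor{D})_{\source(e)}\bigr)_{e}$. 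That $\Phi(T)$ is a morphism of diagrams follows from the fact that $T$ commutes with every tail edge $e_{v,n}$ (whence $T_{(v,n)}\circ\iota^{\functor{D}}_{v,n}=\iota^{\functor{E}}_{v,n}\circ T_{(v,0)}$) and with every original edge $e$.

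For the quasi-inverse I would take $\Psi\colon \CKL_{\category{C}}(G)\to\CKL_{\category{C}}(G_{\mathrm{od},d})$ given by $\Psi(\functor{F})_{(v,n)}\coloneqq \functor{F}_v$, $\Psi(\functor{F})_{e_{v,n}}\coloneqq \id_{\functor{F}_v}$, and $\Psi(\functor{F})_e\coloneqq \functor{F}_e$, with $\Psi(T)_{(v,n)}\coloneqq T_v$. One checks directly that the coproduct condition of $\Psi(\functor{F})$ at each $(v,0)$ is exactly the coproduct condition of $\functor{F}$ at $v$, while at each $(v,n)$ with $n\ge 1$ it is the automatically satisfied requirement that an identity map be an isomorphism. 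Because all the tail maps of $\Psi(\functor{F})$ are identities the chains collapse to identities, so $\Phi\circ\Psi=\id$ on the nose. In the other direction the morphisms $(\eta_{\functor{D}})_{(v,n)}\coloneqq \iota^{\functor{D}}_{v,n}\colon \functor{D}_{(v,0)}=\Psi(\Phi(\functor{D}))_{(v,n)}\to \functor{D}_{(v,n)}$ assemble, using exactly the compatibilities recorded above, into a natural isomorphism $\Psi\circ\Phi\cong\id$.

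I expect no deep obstacle: the entire content is the bookkeeping with the doubly-indexed vertices $(v,n)$, together with the single observation that contracting a chain of forced isomorphisms turns one coproduct cone into another. The point worth emphasising is why \emph{no} hypothesis on coproducts in $\category{C}$ is required (in contrast to Theorem \ref{thm:sinks}): every coproduct condition newly introduced by the out-delay is a one-edge condition, i.e.\ the assertion that some map is an isomorphism, and the only genuine coproducts appearing are those already present in $\functor{D}$ (respectively $\functor{F}$) at the vertices $(v,0)$ (respectively $v$).
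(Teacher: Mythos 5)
Your proposal is correct and follows essentially the same route as the paper: your expansion functor $\Psi$ (identities on the tail edges, $\functor{F}_e$ on the original edges) is exactly the functor the paper constructs, and your contraction $\Phi$ together with the tail-composite isomorphisms $\iota^{\functor{D}}_{v,n}$ is precisely what the paper builds in its essential-surjectivity argument. The only difference is packaging: you exhibit an explicit quasi-inverse pair with $\Phi\circ\Psi=\id$ and $\Psi\circ\Phi\cong\id$, whereas the paper proves the single functor is full, faithful, and essentially surjective, your naturality check for $\eta$ doing the same work as the paper's fullness argument (which shows $S_{(v,n)}=S_{(v,0)}$).
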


\begin{proof}
To simplify the notation we will omit the ``$\mathrm{od}$'' and just write $G_d=(G^0_d,G^1_d,\source_d,\target_d)$. 
For each diagram $\functor{D}$ in $\CKL_{\category{C}}(G)$ let $\Phi (\functor{D})$ be the diagram in $\category{C}$ of shape $G_{d}$ defined by
\[
\Phi (\functor{D})_{(v,n)} = \functor{D}_v,\quad \Phi (\functor{D})_e = \functor{D}_e, \quad \Phi (\functor{D})_{e_{v,n}} = \id_{\functor{D}_v}.
\]
For the graph $G$ and the function $d$ in Example \ref{ex:out-delay}, for instance, $\Phi$ sends each diagram
\[
\xygraph{
{X}="v":^-{\phi}@/^/[r] {Y}="w":^-{\psi}@/^/"v","w":^{\rho}@(ur,dr)"w"
}
\]
to the diagram
\[
\xygraph{
{X}="v0":^-{\phi}@/^/[r(1.5)]{Y}="w0":^-{\id}[r(1.5)]{Y}="w1"
(:^-{\psi}@/^10pt/"v0",:^-{\id}[r(1.5)]{Y}="w2"(:_-{\rho}@/_20pt/"w0",:^-{\id}[r(1.5)]{Y} :^-{\id}[r(1.5)]{\cdots}
)
)
}
\]

The diagram $\Phi (\functor{D})$ satisfies the coproduct condition: for the vertices $(v,0)$ this follows immediately from the fact that $\functor{D}$ satisfies the coproduct condition, while for the vertices $(v,n)$ with $n\geq 1$ this follows from the fact that $(v,n)\in G^0_{d}$ is the target of exactly one edge $e_{(v,n)}\in G^1_{d}$, where $\Phi (\functor{D})_{e_{(v,n)}}$ is an isomorphism.

We extend $\Phi $ to a functor $\CKL_{\category{C}}(G)\to \CKL_{\category{C}}(G_{d})$ by defining, for each morphism $T:\functor{D}\to \functor{E}$ in $\CKL_{\category{C}}(G)$, a morphism $\Phi (T): \Phi (\functor{D})\to \Phi (\functor{E})$ in $\CKL_{\category{C}}(G_d)$ with $\Phi (T)_{(v,n)}\coloneqq T_v$ for each $(v,n)\in G^0_{d}$. 

Since each vertex $v\in G^0$ gives rise to at least one vertex $(v,n)\in G^0_{d}$, each component $T_v$ of $T$ appears as one of the components $\Phi (T)_{(v,n)}$ of $\Phi (T)$, and so the functor $\Phi $ is faithful.

To see that $\Phi $ is full, let $\functor{D}$ and $\functor{E}$ be diagrams in $\CKL_{\category{C}}(G)$, and let $S:\Phi (\functor{D})\to \Phi (\functor{E})$ be a morphism in $\CKL_{\category{C}}(G_{d})$. Notice that for each vertex $(v,n)\in G^0_{d}$ with $n\geq 1$ we have
\[
S_{(v,n)} =  S_{\target_d(v,n)}\circ \Phi (\functor{D})_{e_{v,n}} = \Phi (\functor{E})_{e_{v,n}} \circ S_{\source_d(v,n)} = S_{(v,n-1)},
\]
and so $S_{(v,n)}=S_{(v,0)}$ for all $n$. Now for each $v\in G$ define a morphism $T_v : \functor{D}_v\to \functor{E}_v$ in $\category{C}$ by $T_v \coloneqq S_{(v,0)}$. For each edge $e\in G^1$ we have
\[
T_{\target(e)}\circ \functor{D}_e = S_{\target_d(e)}\circ \Phi (\functor{D})_e = \Phi (\functor{E})_e\circ S_{\source_d(e)} = \functor{E}_e\circ T_{\source(e)},
\]
and this shows that the collection of morphisms $T = (T_v\ |\ v\in G^0)$ is a morphism in $\CKL_{\category{C}}(G)$. For each vertex $(v,n)\in G^0_d$ we have 
\[
S_{(v,n)}=S_{(v,0)} = T_v =\Phi (T)_{(v,n)}
\]
and so $\Phi $ is full.

Finally, to show that the functor $\Phi $ is essentially surjective we let $\functor{E}$ be a diagram in $\CKL_{\category{C}}(G_d)$. Let $\functor{D}$ be the diagram of shape $G$ in $\category{C}$ given on $v\in G^0$ and on $e\in G^1$ by
\[
\functor{D}_v \coloneqq \functor{E}_{(v,0)} \qquad \text{and}\qquad \functor{D}_e \coloneqq \begin{cases} \functor{E}_e & \text{if $d(e)=0$,} \\ \functor{E}_e\circ \functor{E}_{e_{\source(e),d(e)}}\circ \functor{E}_{e_{\source(e),d(e)-1}} \circ \cdots \circ \functor{E}_{e_{\source(e),1}} & \text{if $d(e)\geq 1$.}\end{cases}
\]
The fact that this $\functor{D}$ satisfies the coproduct condition follows immediately from the fact that $\functor{E}$ does so, using in particular the fact that each $\functor{E}_{e_{v,n}}$ is an isomorphism. We will prove that $\Phi (\functor{D})\cong \functor{E}$ by considering the maps $T_{(v,n)} : \Phi (\functor{D})_{(v,n)}= \functor{E}_{(v,0)} \to \functor{E}_{(v,n)}$ defined by
\[
T_{(v,n)} \coloneqq \begin{cases} \id_{\functor{E}_{(v,0)}} & \text{if $n=0$,} \\ \functor{E}_{e_{v,n}}\circ \functor{E}_{e_{v,n-1}} \circ \cdots \circ \functor{E}_{e_{v,1}} & \text{if $n\geq 1$}. \end{cases}
\]
Each $T_{(v,n)}$ is an isomorphism in $\category{C}$, because each $\functor{E}_{e_{v,m}}$ is an isomorphism. The family of morphisms $T=(T_{(v,n)}\ |\ (v,n)\in G_d^0)$ is a morphism in $\CKL_{\category{C}}(G_d)$ from $\Phi (\functor{D})$ to $\Phi (\functor{E})$ because for each edge $e\in G^1$ we have
\[
T_{\target_d(e)} \circ \Phi (\functor{D})_e 
 = \id_{{\functor{E}_{(\target(e),0)}}} \circ \functor{E}_e\circ \functor{E}_{e_{\source(e),d(e)}}\circ \functor{E}_{e_{\source(e),d(e)-1}} \circ \cdots \circ \functor{E}_{e_{\source(e),1}}
 = \functor{E}_e \circ T_{\source_d(e)},
\]
while for each $e_{v,n}\in G_d^1$ we have
\[
T_{\target_d(e_{v,n})} \circ \Phi (\functor{D})_{e_{v,n}} = T_{(v,n)} \circ \id_{\functor{E}_{(v,0)}} = \functor{E}_{e_{v,n}}\circ \functor{E}_{e_{v,n-1}} \circ \cdots \circ \functor{E}_{e_{v,1}} 
= \functor{E}_{e_{v,n}} \circ T_{\source_d(e_{v,n})}.
\]
Thus $T: \Phi (\functor{D}) \to \functor{E}$ is an isomorphism in $\CKL_{\category{C}}(G_d)$, and so $\Phi $ is essentially surjective.
\end{proof}

\subsection{Adding heads at sources}

\begin{definition}\label{def:heads}
Let $G=(G^0,G^1,\source, \target)$ be a directed graph, and let $G^0_0$ be the set of sources in $G$. Define a new graph $\widehat{G}=(\widehat{G}^0,\widehat{G}^1,\hat{\source},\hat{\target})$ by
\[
\begin{aligned}
& \widehat{G}^0 = G^0 \sqcup \{ (v,n)\ |\ v\in G^0_0,\ n=1,2,3,\ldots\} \\
& \widehat{G}^1 = G^1 \sqcup \{ e_{v,n}\ |\ v\in G^0_0,\ n=1,2,3,\ldots\} \\
& \hat{\source}(e)=\source(e), \qquad \hat{\source}(e_{v,n}) = (v,n),\\
& \hat{\target}(e)=\target(e), \qquad \hat{\target}(e_{v,n}) = \begin{cases} (v,n-1) & \text{if $n\geq 2$}\\ v & \text{if $n=1$.}
\end{cases}
\end{aligned}
\]
\end{definition}

For example, if $G$ is the graph
\begin{equation}\label{eq:heads-ex1}
\xygraph{
{v}(:@/^/[r]{\closed}="w",:@/_/"w":@(ur,dr)"w")
}
\end{equation}
then $\widehat{G}$ is the graph 
\[
\xygraph{
{\cdots}:^-{e_{v,3}}[r(1.5)]{(v,2)}:^-{e_{v,2}}[r(1.5)]{(v,1)}:^-{e_{v,1}}[r(1.5)]{v}(:@/^/[r]{\closed}="w",:@/_/"w":@(ur,dr)"w")
}.
\]
Notice that in general $\widehat{G}$ has no sources, and that if $G$ has no sources then $\widehat{G}=G$.

\begin{theorem}\label{thm:heads}
Let $G$ be a directed graph, and let $\widehat{G}$ be the graph obtained by adding a head at each source of $G$ as in Definition \ref{def:heads}. For each category $\category{C}$ there is an equivalence of categories  
\(
\CKL_{\category{C}}(G) \cong \CKL_{\category{C}}( \widehat{G}).
\)
\end{theorem}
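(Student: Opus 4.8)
The plan is to follow the same strategy as the proof of Theorem~\ref{thm:out-delay}, since adding a head at a source is formally the mirror image of an out-delay: in both cases one appends an infinite tail of vertices each of which receives exactly one edge, so that the coproduct condition (via Example~\ref{ex:one-edge}) forces the corresponding morphism in any diagram to be an isomorphism. The key structural observation is that for any object $\functor{E}$ of $\CKL_{\category{C}}(\widehat{G})$, the coproduct condition at each vertex $(v,n)$ with $n\geq 1$ and at each original source $v$ says that the maps $\functor{E}_{e_{v,n}}:\functor{E}_{(v,n)}\to \functor{E}_{(v,n-1)}$ (with the convention $(v,0)=v$) are isomorphisms, so the entire tail feeding into $v$ is a chain of isomorphisms; meanwhile at every non-source vertex $w$ of $G$ one has $\hat{\target}^{-1}(w)=\target^{-1}(w)$, so the coproduct condition there is inherited unchanged from $G$.

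First I would define a functor $\Phi:\CKL_{\category{C}}(G)\to \CKL_{\category{C}}(\widehat{G})$ by $\Phi(\functor{D})_{v}=\functor{D}_v$ and $\Phi(\functor{D})_{(v,n)}=\functor{D}_v$ on vertices, $\Phi(\functor{D})_e=\functor{D}_e$ on the old edges, and $\Phi(\functor{D})_{e_{v,n}}=\id_{\functor{D}_v}$ on the new edges; on a morphism $T$ I would set $\Phi(T)_v=T_v$ and $\Phi(T)_{(v,n)}=T_v$. Checking that $\Phi(\functor{D})$ satisfies the coproduct condition is immediate: at the tail vertices and at the old sources the unique incoming map is an identity, hence an isomorphism, and at the remaining vertices the condition is inherited verbatim from $\functor{D}$. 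Faithfulness is then clear, since each component $T_v$ reappears as $\Phi(T)_v$.

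For fullness I would take a morphism $S:\Phi(\functor{D})\to \Phi(\functor{E})$ in $\CKL_{\category{C}}(\widehat{G})$ and use the relation $S_{(v,n-1)}\circ \id = \id\circ S_{(v,n)}$ coming from the edge $e_{v,n}$ to conclude that $S_{(v,n)}=S_{v}$ for all $n$; setting $T_v:=S_v$ then gives a morphism in $\CKL_{\category{C}}(G)$ (compatibility with the old edges transfers directly) with $\Phi(T)=S$. For essential surjectivity, given $\functor{E}$ in $\CKL_{\category{C}}(\widehat{G})$ I would let $\functor{D}$ be the restriction of $\functor{E}$ to $G^0$ and $G^1$; this satisfies the coproduct condition because $\hat{\target}^{-1}(w)=\target^{-1}(w)$ at every non-source $w$, while the sources of $G$ impose no condition. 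To produce an isomorphism $\Phi(\functor{D})\cong \functor{E}$ I would set $T_v=\id_{\functor{E}_v}$ and define $T_{(v,n)}:\functor{E}_v\to \functor{E}_{(v,n)}$ inductively by $T_{(v,n)}=\functor{E}_{e_{v,n}}^{-1}\circ T_{(v,n-1)}$, i.e.\ as the composite of inverse tail-isomorphisms $\functor{E}_{e_{v,n}}^{-1}\circ\cdots\circ\functor{E}_{e_{v,1}}^{-1}$; these are isomorphisms and, by construction, intertwine the edge maps of $\Phi(\functor{D})$ and $\functor{E}$.

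I do not expect any serious obstacle: the argument is essentially a mirror image of the proof of Theorem~\ref{thm:out-delay}, and the only point requiring a little care is the bookkeeping in the essential-surjectivity step, where one must compose the correct inverse isomorphisms along each infinite tail and verify that they intertwine both the new edges $e_{v,n}$ (automatic from the inductive definition) and the old edges $e\in G^1$ (where the two relevant components of $T$ are identities). One could even abbreviate the exposition by remarking that the proof is obtained from that of Theorem~\ref{thm:out-delay} by reversing the orientation of the appended tails.
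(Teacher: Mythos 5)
Your proposal is correct and takes essentially the same approach as the paper's proof: the identical functor $\Phi$ (copying $\functor{D}_v$ along each tail and sending the new edges $e_{v,n}$ to identities), the same fullness argument collapsing $S_{(v,n)}$ to $S_v$ via the edge relations, and the same essential-surjectivity step restricting $\functor{E}$ to $G$ and building the isomorphism from composites of the inverse tail maps $\functor{E}_{e_{v,n}}^{-1}\circ\cdots\circ\functor{E}_{e_{v,1}}^{-1}$.
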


\begin{proof}
For each diagram $\functor{D}$ in $\CKL_{\category{C}}(G)$ we define a diagram $\Phi (\functor{D})$ of shape $\widehat{G}$ by setting
\[
\Phi (\functor{D})_v = \functor{D}_v,\quad \Phi (\functor{D})_e = \functor{D}_e, \quad \Phi (\functor{D})_{(v,n)} \coloneqq \functor{D}_v,\quad \text{and}\quad \Phi (\functor{D})_{e_{v,n}} \coloneqq \id_{\functor{D}_v}.
\]
For the graph \eqref{eq:heads-ex1}, for instance, $\Phi$ sends each diagram
\[
\xygraph{
{X}(:^-{\phi}@/^/[r]{Y}="w",:_-{\psi}@/_/"w":^-{\rho}@(ur,dr)"w")
}
\]
to the diagram
\[
\xygraph{
{\cdots}:^-{\id}[r]{X}:^-{\id}[r]{X}:^-{\id}[r]{X}(:^-{\phi}@/^/[r]{Y}="w",:_-{\psi}@/_/"w":^-{\rho}@(ur,dr)"w")
}.
\]

The diagram $\Phi(\functor{D})$ clearly inherits the coproduct condition from the diagram $\functor{D}$, and so $\Phi (\functor{D})$ is an object of $\CKL_{\category{C}}(\widehat{G})$. 

If $T:\functor{D}\to \functor{E}$ is a morphism in $\CKL_{\category{C}}(G)$ then we let $\Phi (T) : \Phi (\functor{D})\to \Phi (\functor{E})$ be the morphism in $\CKL_{\category{C}}(\widehat{G})$ define by
\[
\Phi (T)_v \coloneqq T_v \qquad \text{and}\qquad \Phi (T)_{(v,n)} \coloneqq T_v.
\]
In this way we obtain a functor $\Phi  : \CKL_{\category{C}}(G)\to \CKL_{\category{C}}(\widehat{G})$. 

If $T:\functor{D}\to \functor{E}$ is a morphism in $\CKL_{\category{C}}(G)$, then each component $T_v$ of $T$ appears as a component of the morphism $\Phi (T)$, and it follows from this that $\Phi $ is faithful. To see that $\Phi $ is full, let $S:\Phi (\functor{D})\to \Phi (\functor{E})$ be a morphism in $\CKL_{\category{C}}(\widehat{G})$, and note that for each source $v$ of $G$ and each $n\geq 1$ we have 
\[
S_{(v,n)} = \Phi (\functor{E})_{e_{v,n}}  \circ S_{(v,n)}  = S_{(v,n-1)}\circ \Phi (\functor{D})_{e_{v,n}} = S_{(v,n-1)}
\]
where $(v,0)\coloneqq v$. Thus $S_{(v,n)}=S_v$ for each source $v$ and each $n\geq 1$, and so we have $S = \Phi (T)$ where $T:\functor{D}\to \functor{E}$ is the morphism defined by $T_v\coloneqq S_v$ for each $v\in G^0$.

Finally, to show that $\Phi $ is essentially surjective, let $\functor{E}$ be a diagram in $\CKL_{\category{C}}(\widehat{G})$. Note that for each source $v$ of $G$ and each $n\geq 1$ the morphism $\functor{E}_{e_{v,n}}:\functor{E}_{(v,n)}\to \functor{E}_{(v,n-1)}$ is an isomorphism, because $(v,n-1)$ is the target of only one edge (cf Example \ref{ex:one-edge}). Let $\functor{D}$ be the object in $\CKL_{\category{C}}(G)$ defined by $\functor{D}_v\coloneqq \functor{E}_v$ and $\functor{D}_e\coloneqq \functor{E}_e$ for each $v\in G^0$ and each $e\in G^1$. The diagram $\functor{D}$ satisfies the coproduct condition because $\functor{E}$ does, and the maps
\[
S_v : \functor{E}_v \xrightarrow{\id} \functor{E}_v \qquad \text{and}\qquad S_{(v,n)} : \functor{E}_v \xrightarrow{(\functor{E}_{e_{v,1}}\circ \functor{E}_{e_{v,2}}\circ \cdots \circ \functor{E}_{e_{v,n}})^{-1}} \functor{E}_{(v,n)}
\]
assemble to give an isomorphism $S:\Phi (\functor{D})\to \functor{E}$ in $\CKL_{\category{C}}(\widehat{G})$. Thus $\Phi $ is essentially surjective.
\end{proof}

\subsection{In-delays}

\begin{definition}\label{def:in-delay}
Let $G=(G^0,G^1,\source,\target)$ be a directed graph with no infinite receivers, and let $d:G^0\sqcup G^1 \to \N$ be a function such that for each vertex $v\in G^0$ we have 
\[
d(v)=\begin{cases} \sup_{e\in \target^{-1}(v)} d(e) & \text{if $v$ is not a source,}\\
0 & \text{if $v$ is a source}
\end{cases}
\] 
We define a new graph $G_{\mathrm{id},d}=(G^0_{\mathrm{id},d},G^1_{\mathrm{id},d}, \source_{\mathrm{id},d}, \target_{\mathrm{id},d})$, called the \emph{in-delay} of $G$  determined by the function $d$, as follows:
\[
\begin{aligned}
& G^0_{\mathrm{id},d} = \{ (v,n)\ |\ v\in G^0, n\in \N,\ 0\leq n\leq d(v)\} \\
& G^1_{\mathrm{id},d} = G^1 \sqcup \{ e_{v,n}\ |\ v\in G^0, n\in \N,\ 1\leq n\leq d(v)\} \\
& \source_{\mathrm{id},d}(e) = (\source(e), 0), \qquad\ \! \source_{\mathrm{id},d}(e_{v,n})=(v,n)
\\
& \target_{\mathrm{id},d}(e) = (\target(e),d(e)), \quad\target_{\mathrm{id},d}(e_{v,n})=(v,n-1).
\end{aligned}
\]
\end{definition}

For example, if $G$ is the directed graph
\begin{equation}\label{eq:in-delay-ex1}
\xygraph{
{v}="v":^-{e}@/^/[r] {w}="w":^-{f}@/^/"v","w":^{g}@(ur,dr)"w"
}
\end{equation}
and $d:G^0\sqcup G^1 \to \N$ is the function
\[
d(v)=d(f)=0, \quad d(w)=d(g)=2,\quad d(e)=1
\]
then $G_{\mathrm{id},d}$ is the graph
\begin{equation}\label{eq:in-delay-ex2}
\xygraph{
{(w,2)}="w2":_-{e_{w,2}}[l(1.5)] {(w,1)}="w1":_-{e_{w,1}}[l(1.5)] {(w,0)}="w0"(:_-{g}@/_15pt/"w2", :^-{f}@/^15pt/[l(1.5)]{(v,0)}="v0":^-{e}@/^15pt/"w1")
}
\end{equation}

\begin{theorem}\label{thm:in-delay}
Let $G$ be a directed graph with no infinite receivers, and let $G_{\mathrm{id},d}$ be an in-delay of $G$ as in Definition \ref{def:in-delay}. For each category $\category{C}$ with binary coproducts there is an equivalence of categories 
\(
\CKL_{\category{C}}(G) {\cong} \CKL_{\category{C}}(G_{\mathrm{id},d}).
\)
\end{theorem}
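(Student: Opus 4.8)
The plan is to construct a functor $\Phi \colon \CKL_{\category{C}}(G) \to \CKL_{\category{C}}(G_{\mathrm{id},d})$ in the style of the proofs of Theorems \ref{thm:out-delay} and \ref{thm:heads}, and to show it is full, faithful, and essentially surjective. The essential difference from those proofs is that the ``tower'' of delay vertices sitting over a vertex $v$ now carries a nontrivial coproduct condition at each level, rather than a sequence of single-edge isomorphisms; reconciling the iterated binary coproducts along a tower with a single coproduct over $\target^{-1}(v)$ will be the crux, and is where the hypotheses (binary coproducts in $\category{C}$, and no infinite receivers, so that $\target^{-1}(v)$ is finite) enter.

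First I would fix notation for the tower. For a non-source $v\in G^0$ write $N=d(v)=\max_{e\in\target^{-1}(v)}d(e)$ (a maximum, not merely a supremum, since $\target^{-1}(v)$ is finite), and for $0\le n\le N$ set
\[
C_{v,n}\coloneqq \bigsqcup_{\substack{e\in\target^{-1}(v)\\ d(e)\ge n}}\functor{D}_{\source(e)},
\]
a coproduct of a finite nonempty family, which exists because $\category{C}$ has binary coproducts. I would then define $\Phi(\functor{D})_{(v,0)}\coloneqq \functor{D}_v$ for every $v$, and $\Phi(\functor{D})_{(v,n)}\coloneqq C_{v,n}$ for $n\ge 1$. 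The delay edge $e_{v,m}\colon(v,m)\to(v,m-1)$ is sent, for $m\ge 2$, to the canonical coproduct morphism $C_{v,m}\to C_{v,m-1}$ induced by the inclusion of index sets, and $e_{v,1}$ is sent to $\bigsqcup_{d(e)\ge 1}\functor{D}_e\colon C_{v,1}\to\functor{D}_v$. An original edge $e$ with $d(e)=n$ is sent to the canonical injection $\functor{D}_{\source(e)}\to C_{v,n}$ when $n\ge 1$, and to $\functor{D}_e\colon\functor{D}_{\source(e)}\to\functor{D}_v$ when $n=0$. The point of these choices is that, by the associativity of coproducts (a finite coproduct indexed by $J\sqcup K$ is the coproduct of the sub-coproduct over $J$ together with the remaining summands), the coproduct condition for $\Phi(\functor{D})$ at each tower vertex $(v,n)$ becomes a partition of the coproduct condition on $\functor{D}$ at $v$; at the bottom vertex $(v,0)$ this uses precisely that the $\functor{D}_e$ exhibit $\functor{D}_v$ as $\bigsqcup_{e\in\target^{-1}(v)}\functor{D}_{\source(e)}$. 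On morphisms I would set $\Phi(T)_{(v,0)}\coloneqq T_v$ and $\Phi(T)_{(v,n)}\coloneqq\bigsqcup_{d(e)\ge n}T_{\source(e)}$ for $n\ge 1$; that this is a morphism of diagrams, and that $\Phi$ respects identities and composition, follows from functoriality of forming coproducts of morphisms together with the identity $T_{\target(e)}\circ\functor{D}_e=\functor{E}_e\circ T_{\source(e)}$.

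The key structural fact underlying both the coproduct-condition check and the remaining verifications is that the composite of delay morphisms $\pi_{v,n}\coloneqq \Phi(\functor{D})_{e_{v,1}}\circ\cdots\circ\Phi(\functor{D})_{e_{v,n}}\colon C_{v,n}\to\functor{D}_v$ restricts on the summand indexed by $e$ to the injection $\functor{D}_e$, so that $\functor{D}_e=\pi_{v,d(e)}\circ\Phi(\functor{D})_e$ recovers each original edge map as a path down the tower. Faithfulness is then immediate, since $T_v=\Phi(T)_{(v,0)}$ appears as a component of $\Phi(T)$. For fullness, given $S\colon\Phi(\functor{D})\to\Phi(\functor{E})$ I would set $T_v\coloneqq S_{(v,0)}$ and use that $S$ commutes with every edge of $G_{\mathrm{id},d}$---hence with the path realising $\functor{D}_e=\pi_{v,d(e)}\circ\Phi(\functor{D})_e$---to deduce $T_{\target(e)}\circ\functor{D}_e=\functor{E}_e\circ T_{\source(e)}$; that $\Phi(T)=S$ then follows by precomposing with the coproduct injections into each $C_{v,n}$, which are exactly the tower-paths, and invoking the universal property.

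The last and fiddliest step is essential surjectivity. Given $\functor{E}$ in $\CKL_{\category{C}}(G_{\mathrm{id},d})$ I would reconstruct $\functor{D}$ on $G$ by $\functor{D}_v\coloneqq\functor{E}_{(v,0)}$ and $\functor{D}_e\coloneqq\functor{E}_{e_{\target(e),1}}\circ\cdots\circ\functor{E}_{e_{\target(e),d(e)}}\circ\functor{E}_e$ (just $\functor{E}_e$ when $d(e)=0$); unfolding the coproduct conditions up the tower over $v$, again by associativity of finite coproducts, shows $\functor{D}$ satisfies the coproduct condition. Finally I would exhibit an isomorphism $\Phi(\functor{D})\to\functor{E}$ that is the identity at each $(v,0)$ and, at each $(v,n)$ with $n\ge 1$, is the canonical comparison isomorphism between the two coproducts $C_{v,n}=\Phi(\functor{D})_{(v,n)}$ and $\functor{E}_{(v,n)}$ of the same family $\{\functor{E}_{(\source(e),0)}\mid d(e)\ge n\}$. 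I expect the main obstacle to be the bookkeeping in this step: one must check that these comparison isomorphisms are compatible with both the delay edges and the original edges, that is, that they assemble into a morphism of diagrams, which again reduces to matching coproduct injections with tower-paths via the universal property.
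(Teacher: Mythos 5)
Your proposal is correct, and its skeleton---the functor $\Phi$ whose tower levels are coproducts over $\{e\in\target^{-1}(v)\ |\ d(e)\geq n\}$, the recovery of $T$ from $S$ at level $0$ for fullness, and the reconstruction $\functor{D}_v\coloneqq\functor{E}_{(v,0)}$, $\functor{D}_e\coloneqq\functor{E}_{e_{\target(e),1}}\circ\cdots\circ\functor{E}_{e_{\target(e),d(e)}}\circ\functor{E}_e$ for essential surjectivity---coincides with the paper's. There is, however, one genuine organisational difference. The paper defines $\Phi(\functor{D})_{(v,n)}$ uniformly as $\bigsqcup_{e\in\target^{-1}(v),\, d(e)\geq n}\functor{D}_{\source(e)}$ for \emph{all} $n$, including $n=0$; since that coproduct would be empty when $v$ is a source (and the paper never assumes empty coproducts exist), the paper must first reduce without loss of generality to graphs having no sources, which it does by adding heads via Theorem \ref{thm:heads} and checking the compatibility $(\widehat{G})_d=\widehat{(G_d)}$. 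Your hybrid convention $\Phi(\functor{D})_{(v,0)}\coloneqq\functor{D}_v$ removes the empty-coproduct problem at sources, so no such reduction is needed, and it makes faithfulness literally immediate (since $T_v=\Phi(T)_{(v,0)}$ on the nose), whereas the paper must invert the isomorphism $\bigsqcup_e\functor{D}_e$ to recover $T_v$ from $\Phi(T)_{(v,0)}$. The price is mild: your coproduct condition at the bottom of each tower, and your fullness and essential-surjectivity checks, mix the object $\functor{D}_v$ with the coproducts $C_{v,n}$, so the associativity of finite coproducts and the identification of coproduct injections with tower-paths must be invoked exactly as you indicate; these are the same computations the paper performs, just anchored at $\functor{D}_v$ rather than at $C_{v,0}$. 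In short, both routes prove the same statement by the same mechanism, but yours is self-contained (no appeal to Theorem \ref{thm:heads}), while the paper's uniform definition keeps the formulas for $\Phi$ on edges and on morphisms homogeneous across all tower levels.
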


\begin{proof}
To simplify the notation we will omit the ``id'' and just write $G_d=(G_d^0, G_1^0, \source_d, \target_d)$. We may assume without loss of generality that $G$ has no sources. To see why, consider the graph $\widehat{G}$ obtained by adding heads at the sources of $G$ as in Definition \ref{def:heads}, and extend the function $d$ to $\widehat{G}^0\sqcup \widehat{G}^1$ by setting $d(v,n)=d(e_{v,n})=0$ for each source $v$ of $G$ and each $n\geq 1$. We then have $(\widehat{G})_d = \widehat{(G_d)}$, and so if we prove Theorem \ref{thm:in-delay} for graphs (like $\widehat{G}$) with no sources, then that result combined with Theorem \ref{thm:heads} will give a chain of equivalences
\[
\CKL_{\category{C}}(G) \xrightarrow{\cong} \CKL_{\category{C}}(\widehat{G}) \xrightarrow{\cong} \CKL_{\category{C}}( (\widehat{G})_d) = \CKL_{\category{C}}\left(\widehat{(G_d)}\right) \xrightarrow{\cong} \CKL_{\category{C}}(G_d)
\]
as required.

So we assume for the rest of the proof that $G$ has no sources, and we note that the same is then true of $G_d$. For each diagram $\functor{D}$ in $\CKL_{\category{C}}(G)$ let $\Phi (\functor{D})$ be the diagram in $\category{C}$ of shape $G_d$ defined as follows. For each vertex $v\in G^0$ and each $n\leq d(v)$ we define
\[
\Phi (\functor{D})_{(v,n)} \coloneqq 
\bigsqcup_{\substack{e\in \target^{-1}(v),\\ d(e)\geq n}} \functor{D}_{\source(e)}. 
\]
Note that the assumptions that $G$ has no sources or infinite receivers, and that $d(v)=\sup_{e\in \target^{-1}(v)} d(e)$, together ensure that the coproduct on the right-hand side above is indexed by a nonempty finite set, and thus exists (uniquely, up to canonical isomorphism) in the category $\category{C}$. For each edge $e\in G^1$ we let
\begin{equation}\label{eq:in-delay-PhiMe}
\Phi (\functor{D})_e \coloneqq \bigsqcup_{f\in \target^{-1}\source(e)} \functor{D}_f : \bigsqcup_{f\in \target^{-1}\source(e)} \functor{D}_{\source(f)} \to \bigsqcup_{\substack{g\in \target^{-1}\target(e) \\ d(g)\geq d(e)}} \functor{D}_{\source(g)}.
\end{equation}
To make sense of this definition, note that for each $f\in \target^{-1}\source (e)$ we have $\functor{D}_f : \functor{D}_{\source(f)}\to \functor{D}_{\source(e)}$, and $e$ is one of the edges $g$ indexing the coproduct on the right-hand side. So we can define the coproduct of the maps $\functor{D}_f$ as explained in Section \ref{subsec:coproducts}.

Finally, for each edge of the form $e_{v,n}$ in $G_d^1$, where $v\in G^0$ and $1\leq n\leq d(v)$, we define
\begin{equation}\label{eq:in-delay-evn-def}
\Phi (\functor{D})_{e_{v,n}} \coloneqq \bigsqcup_{\substack{e\in \target^{-1}(v),\\ d(e)\geq n}} \id_{\functor{D}_{\source(e)}}: \bigsqcup_{\substack{e\in \target^{-1}(v),\\ d(e)\geq n}} \functor{D}_{\source(e)} \to \bigsqcup _{\substack{f\in \target^{-1}(v),\\ d(f)\geq n-1}} \functor{D}_{\source(f)}.
\end{equation}
To make sense of this definition, note that the set of indices for the coproduct in the domain is a subset of the set of indices for the coproduct in the codomain.

To illustrate: for the graphs $G$ and $G_{d}$ in \eqref{eq:in-delay-ex1} and \eqref{eq:in-delay-ex2}, our map $\Phi$ sends each diagram
\[
\xygraph{
{X}="v":^-{\phi}@/^/[r] {Y}="w":^-{\psi}@/^/"v","w":^{\rho}@(ur,dr)"w"
}
\]
of shape $G$ to the diagram
\[
\xygraph{
{Y}="w2":_-{\text{can}}[l(1.5)] {X\sqcup Y}="w1":_-{\id}[l(1.5)] {X\sqcup Y}="w0"(:_-{\phi\sqcup\rho}@/_15pt/"w2", :^-{\phi\sqcup\rho}@/^15pt/[l(1.5)]{Y}="v0":^-{\psi}@/^15pt/"w1")
}
\]
of shape $G_{d}$. (Here `$\text{can}$' denotes the canonical morphism into the coproduct.)

Returning to the general case, to check that the diagram $\Phi (\functor{D})$ satisfies the coproduct condition we let $(v,n)$ be a vertex of $G_d$, and suppose first that $n<d(v)$. We can then write
\[
\Phi (\functor{D})_{(v,n)} = \left(\bigsqcup_{\substack{e\in \target^{-1}(v)\\ d(e)=n}} \functor{D}_{\source(e)}\right) \sqcup \left( \bigsqcup_{\substack{e\in \target^{-1}(v)\\ d(e) \geq n+1}} \functor{D}_{\source(e)}\right).
\]
Now $\target_d^{-1}(v,n) = \{ e\in \target^{-1}(v)\ |\ d(e)=n\} \sqcup \{e_{v,n+1}\}$, and consulting the definition of $\Phi (\functor{D})$ shows that the map $\bigsqcup_{e\in \target_d^{-1}(v,n)}\Phi (\functor{D})_{e} : \bigsqcup_e \Phi (\functor{D})_{\source_d(e)} \to \Phi (\functor{D})_v$ is the coproduct of the vertical arrows in the diagram
\[
\xymatrix@C=5pt{
\displaystyle \bigsqcup_{\substack{e\in \target^{-1}(v)\\ d(e)=n}} \bigsqcup_{f\in \target^{-1}\source(e)} \functor{D}_{\source(f)} \ar[d]_-{\bigsqcup_e \bigsqcup_f \functor{D}_f} & \sqcup & \displaystyle\bigsqcup_{\substack{e\in \target^{-1}(v)\\ d(e)\geq n+1}} \functor{D}_{\source(e)} \ar[d]^-{\bigsqcup_e \id} 
\\
\displaystyle \bigsqcup_{\substack{e\in \target^{-1}(v)\\ d(e)=n}} \functor{D}_{\source(e)} & \sqcup &
\displaystyle \bigsqcup_{\substack{e\in \target^{-1}(v)\\ d(e) \geq n+1}} \functor{D}_{\source(e)}
}.
\]
The coproduct property of $\functor{D}$ ensures that this map is an isomorphism. The verification of the coproduct condition at each vertex $(v,n)$ with $n=d(v)$ is similar to the above, except that the $d(e)=n+1$ terms are absent. We conclude that $\Phi (\functor{D})$ does satisfy the coproduct condition, and hence is an object of $\CKL_{\category{C}}(G_d)$. 

To describe how we turn $\Phi $ into a functor, we let $T:\functor{D}\to \functor{E}$ be a morphism in $\CKL_{\category{C}}(G)$. For each vertex $v\in G^0$, and for each $n\leq d(v)$, we define
\begin{equation}\label{eq:in-delay-PhiTvn}
\Phi (T)_{(v,n)} \coloneqq
\bigsqcup_{\substack{e\in \target^{-1}(v),\\ d(e)\geq n}} T_{\source(e)} : \bigsqcup_{\substack{e\in \target^{-1}(v),\\ d(e)\geq n}} \functor{D}_{\source(e)} \to \bigsqcup_{\substack{e\in \target^{-1}(v),\\ d(e)\geq n}} \functor{E}_{\source(e)}.
\end{equation}
The fact that $T$ is a morphism of diagrams $\functor{D}\to \functor{E}$ ensures that $\Phi (T)$ is a morphism of diagrams $\Phi (\functor{D})\to \Phi (\functor{E})$, and since $\Phi $ obviously preserves identities and commutes with composition we have defined a functor $\Phi :\CKL_{\category{C}}(G) \to \CKL_{\category{C}}(G_d)$.

To see that the functor $\Phi $ is faithful, note that for each morphism $T:\functor{D}\to \functor{E}$ in $\CKL_{\category{C}}(G)$, and each vertex $v\in G^0$ we have a diagram
\[
\xymatrix@C=60pt{
\displaystyle\bigsqcup_{e\in \target^{-1}(v)} \functor{D}_{\source(e)} \ar[d]_-{\bigsqcup_e T_{\source(e)}} \ar[r]^-{\bigsqcup_e \functor{D}_e} & \functor{D}_v \ar[d]^-{T_v} \\
\displaystyle\bigsqcup_{e\in \target^{-1}(v)} \functor{E}_{\source(e)} \ar[r]^-{\bigsqcup_e \functor{E}_e} & \functor{E}_v
}
\]
that commutes because $T$ is a morphism of diagrams. The left-hand vertical arrow is $\Phi (T)_{(v,0)}$, and the horizontal arrows are isomorphisms because $\functor{D}$ and $\functor{E}$ satisfy the coproduct condition, and so $\Phi (T)$ determines $T$.

We next prove that the functor $\Phi $ is full, which will require some work. Let $\functor{D}$ and $\functor{E}$ be diagrams in $\CKL_{\category{C}}(G)$, and let $S:\Phi (\functor{D})\to \Phi (\functor{E})$ be a morphism in $\CKL_{\category{C}}(G_d)$. For each vertex $v\in G^0$ we let $T_v : \functor{D}_v\to \functor{E}_v$ be defined by the following commutative diagram, in which the horizontal arrows are isomorphisms:
\begin{equation}\label{eq:in-delay-comm1}
\xymatrix@C=60pt{
\displaystyle\bigsqcup_{e\in \target^{-1}(v)} \functor{D}_{\source(e)} \ar[d]_-{S_{(v,0)}} \ar[r]^-{\bigsqcup_e \functor{D}_e} & \functor{D}_v \ar[d]^-{T_v} \\
\displaystyle\bigsqcup_{e\in \target^{-1}(v)} \functor{E}_{\source(e)} \ar[r]^-{\bigsqcup_e \functor{E}_e} & \functor{E}_v
}
\end{equation}
We claim that the collection of morphisms $T=(T_v\ |\ v\in G^0)$ is a morphism in $\CKL_{\category{C}}(G)$ from $\functor{D}$ to $\functor{E}$, or in other words that $\functor{E}_e \circ T_{\source(e)} = T_{\target(e)} \circ \functor{D}_e$ for every $e\in G^1$. 

To verify this equality we first note that the equality
\begin{equation*}\label{eq:in-delay-comm2}
\Phi (\functor{E})_e \circ S_{(\source(e),0)} = S_{(\target(e),d(e))}\circ \Phi (\functor{D})_{e}
\end{equation*}
(which holds because $S$ is a morphism in $\CKL_{\category{C}}(G_d)$), combined with the definition \eqref{eq:in-delay-comm1} of $T_{\source(e)}$ and the definition \eqref{eq:in-delay-PhiMe} of $\Phi (\functor{D})_{e}$, yields a commuting diagram
\[
\xymatrix@C=60pt{
\functor{D}_{\source(e)} \ar[d]_-{T_{\source(e)}} \ar[r]^-{(\bigsqcup_{f\in \target^{-1}\source(e)} \functor{D}_f)^{-1}} 
& 
\displaystyle \bigsqcup_{f\in \target^{-1}\source(e)} \functor{D}_{\source(f)} \ar[d]^-{S_{(\source(e),0)}} \ar[r]^-{\bigsqcup_{f\in \target^{-1}\source(e)}\functor{D}_f} 
&\displaystyle \bigsqcup_{\substack{g\in \target^{-1}\target(e)\\ d(g)\geq d(e)}} \functor{D}_{\source(g)} \ar[d]^-{S_{(\target(e),d(e))}} \\
\functor{E}_{\source(e)} \ar[r]^-{(\bigsqcup_{f\in \target^{-1}\source(e)} \functor{E}_f)^{-1}} 
& 
\displaystyle\bigsqcup_{f\in \target^{-1}\source(e)} \functor{E}_{\source(f)}  \ar[r]^-{\bigsqcup_{f\in \target^{-1}\source(e)}\functor{E}_f} 
& \displaystyle \bigsqcup_{\substack{g\in \target^{-1}\target(e)\\ d(g)\geq d(e)}} \functor{E}_{\source(g)}
}
\]
Upon composing the horizontal arrows in the above diagram we obtain a commuting diagram 
\begin{equation}\label{eq:in-delay-full-patch1}
\xymatrix@C=50pt{
\functor{D}_{\source(e)} \ar[d]_-{T_{\source(e)}} \ar[r]^-{\text{canonical}} &\displaystyle \bigsqcup_{\substack{g\in \target^{-1}\target(e)\\ d(g)\geq d(e)}} \functor{D}_{\source(g)} \ar[d]^-{S_{(\target(e),d(e))}} \\
\functor{E}_{\source(e)} \ar[r]^-{\text{canonical}} &\displaystyle \bigsqcup_{\substack{g\in \target^{-1}\target(e)\\ d(g)\geq d(e)}} \functor{E}_{\source(g)}
}
\end{equation}
where the horizontal arrows  are the canonical morphisms arising from the fact that $e$ is one of the indices $g$ appearing in the coproducts. 

Next we observe that the equality
\begin{equation*}
\begin{aligned}
&  \Phi (\functor{E})_{e_{\target(e),1}} \circ  \Phi (\functor{E})_{e_{\target(e),2}} \circ \cdots \circ  \Phi (\functor{E})_{e_{\target(e),d(e)}} \circ S_{(\target(e),d(e))} \\
& = S_{(\target(e),0)} \circ  \Phi (\functor{D})_{e_{\target(e),1}} \circ  \Phi (\functor{D})_{e_{\target(e),2}} \circ \cdots \circ  \Phi (\functor{D})_{e_{\target(e),d(e)}}
\end{aligned}
\end{equation*}
(which holds because $S$ is a morphism in $\CKL_{\category{C}}(G_d)$), together with the definition \eqref{eq:in-delay-evn-def} of the maps $\Phi (\functor{D})_{e_{\target(e),n}}$ and $\Phi (\functor{E})_{e_{\target(e),n}}$, ensures that the diagram
\begin{equation}\label{eq:in-delay-full-patch2}
\xymatrix@C=50pt{
\displaystyle \bigsqcup_{\substack{g\in \target^{-1}\target(e)\\ d(g)\geq d(e)}} \functor{D}_{\source(g)} \ar[d]^-{S_{(\target(e),d(e))}} \ar[r]^-{\bigsqcup_g \id_{\functor{D}_{\source(g)}}} 
 &
\displaystyle \bigsqcup_{f\in \target^{-1}\target(e)} \functor{D}_{\source(f)} \ar[d]^-{S_{(\target(e),0)}}
 \\
\displaystyle \bigsqcup_{\substack{g\in \target^{-1}\target(e)\\ d(g)\geq d(e)}} \functor{E}_{\source(g)}\ar[r]^-{\bigsqcup_g \id_{\functor{E}_{\source(g)}} }
 &
\displaystyle \bigsqcup_{f\in \target^{-1}\target(e)} \functor{E}_{\source(f)}
 }
 \end{equation}
 commutes. The definition \eqref{eq:in-delay-comm1} of $T_{\target(e)}$ ensures that the diagram
 \begin{equation}\label{eq:in-delay-full-patch3}
 \xymatrix@C=50pt{
\displaystyle  \bigsqcup_{f\in \target^{-1}\target(e)} \functor{D}_{\source(f)} \ar[d]^-{S_{(\target(e),0)}} \ar[r]^-{\bigsqcup_{f}\functor{D}_f}
  &
  \functor{D}_{\target(e)} \ar[d]^-{T_{\target(e)}} \\
\displaystyle  \bigsqcup_{f\in \target^{-1}\target(e)} \functor{E}_{\source(f)} \ar[r]^-{\bigsqcup_{f}\functor{E}_f}
  &
  \functor{E}_{\target(e)} 
 }
 \end{equation}
 also commutes, and patching together \eqref{eq:in-delay-full-patch1}, \eqref{eq:in-delay-full-patch2}, and \eqref{eq:in-delay-full-patch3} gives a commuting diagram
 \begin{equation*}\label{eq:in-delay-full-patched}
\xymatrix@C=40pt{
\functor{D}_{\source(e)} \ar[d]_-{T_{\source(e)}} \ar[r]^-{\text{canonical}} 
&\displaystyle \bigsqcup_{\substack{g\in \target^{-1}\target(e),\\ d(g)\geq d(e)}} \functor{D}_{\source(g)} \ar[d]^-{S_{(\target(e),d(e))}}  \ar[r]^-{\bigsqcup_g \id} 
&\displaystyle \bigsqcup_{f\in \target^{-1}\target(e)} \functor{D}_{\source(f)} \ar[d]^-{S_{(\target(e),0)}} \ar[r]^-{\bigsqcup_{f} \functor{D}_f} 
& \functor{D}_{\target(e)} \ar[d]^-{T_{\target(e)}} 
\\
\functor{E}_{\source(e)} \ar[r]^-{\text{canonical}} 
&\displaystyle \bigsqcup_{\substack{g\in \target^{-1}\target(e),\\ d(g)\geq d(e)}} \functor{E}_{\source(g)} \ar[r]^-{\bigsqcup_g \id}
&\displaystyle \bigsqcup_{f\in \target^{-1}\target(e)} \functor{E}_{\source(f)} \ar[r]^-{\bigsqcup_{f} \functor{E}_f} 
& \functor{E}_{\target(e)}
}
\end{equation*}
The composition of the horizontal arrows along the top of this last diagram is $\functor{D}_e$, while the composition along the bottom is $\functor{E}_e$, and so we obtain the desired equality $T_{\target(e)}\circ \functor{D}_e = \functor{E}_e \circ T_{\source(e)}$. This shows that our $T$ is indeed a morphism in  $\CKL_{\category{C}}(G)$. 

To complete the proof that our functor $\Phi $ is full, it remains to show that $\Phi (T)=S$. To do this we fix a vertex $v\in G^0$ and a positive integer $n\leq d(v)$. According to \eqref{eq:in-delay-PhiTvn}, and to the diagram \eqref{eq:in-delay-comm1}, the map $\Phi (T)_{(v,n)}$ is the unique morphism 
\[
R : \bigsqcup_{\substack{e\in \target^{-1}(v),\\ d(e)\geq n}} \functor{D}_{\source(e)} \to \bigsqcup_{\substack{e\in \target^{-1}(v),\\ d(e)\geq n}} \functor{E}_{\source(e)}
\] 
with the property that for each $f\in \target^{-1}(v)$ with $d(f)\geq n$, the diagram
\begin{equation}\label{eq:in-delay-full-final}
\xymatrix@C=50pt
{
\displaystyle\bigsqcup_{g\in \target^{-1}\source(f)} \functor{D}_{\source(g)} \ar[d]_-{S_{(\source(f),0)}} \ar[r]^-{\bigsqcup_{g}\functor{D}_g} &\displaystyle \bigsqcup_{\substack{e\in \target^{-1}(v),\\ d(e)\geq n}} \functor{D}_{\source(e)} \ar[d]^-{R} \\
\displaystyle\bigsqcup_{g\in \target^{-1}\source(f)} \functor{E}_{\source(g)}  \ar[r]^-{\bigsqcup_{g}\functor{E}_g} &\displaystyle \bigsqcup_{\substack{e\in \target^{-1}(v),\\ d(e)\geq n}} \functor{E}_{\source(e)} 
}
\end{equation}
commutes. Now the top horizontal arrow in \eqref{eq:in-delay-full-final} is equal to the composition
\[
\Phi (\functor{D})_{e_{v,n+1}} \circ \Phi (\functor{D})_{e_{v,n+2}} \circ \cdots \circ \Phi (\functor{D})_{e_{v,d(f)}} \circ \Phi (\functor{D})_f,
\]
while the bottom horizontal  arrow is equal to the analogous expression with all of the $\functor{D}$s replaced by $\functor{E}$s. The fact that $S$ is a morphism from $\Phi (\functor{D})$ to $\Phi (\functor{E})$  ensures that
\[
\begin{aligned}
& S_{(v,n)}\circ \Phi (\functor{D})_{e_{v,n+1}} \circ \Phi (\functor{D})_{e_{v,n+2}} \circ \cdots \circ \Phi (\functor{D})_{e_{v,d(f)}} \circ \Phi (\functor{D})_f \\
& = \Phi (\functor{E})_{e_{v,n+1}} \circ \Phi (\functor{E})_{e_{v,n+2}} \circ \cdots \circ \Phi (\functor{E})_{e_{v,d(f)}} \circ \Phi (\functor{E})_f \circ S_{(\source(f),0)},
\end{aligned}
\]
and so putting $R=S_{(v,n)}$ into \eqref{eq:in-delay-full-final} does indeed yield a commuting diagram. We conclude that $S_{(v,n)}=\Phi (T)_{(v,n)}$ and that our functor $\Phi $ is full.

The last step in the proof of Theorem \ref{thm:in-delay} is to prove that $\Phi $ is essentially surjective. To do this we fix a diagram $\functor{E}$ in $\CKL_{\category{C}}(G_d)$, and define a diagram $\functor{D}$ of shape $G$ by
\[
\functor{D}_v \coloneqq \functor{E}_{(v,0)}, \qquad \functor{D}_e \coloneqq \begin{cases} \functor{E}_e & \text{if $d(e)=0$,} \\ \functor{E}_{e_{v,1}}\circ \functor{E}_{e_{v,2}}\circ \cdots \circ \functor{E}_{e_{v,d(e)}} \circ \functor{E}_e & \text{if $d(e)\geq 1$}\end{cases}
\]
for each $v\in G^0$ and each $e\in G^1$. The fact that $\functor{E}$ satisfies the coproduct condition ensures that $\functor{D}$ does too, and so $\functor{D}$ is an object of $\CKL_{\category{C}}(G)$. Returning to our graphs \eqref{eq:in-delay-ex1} and \eqref{eq:in-delay-ex2} to illustrate, our construction sends a diagram 
\begin{equation}\label{eq:in-delay-exE}
\functor{E}=\quad \xygraph{
{B_2}="w2":_-{\delta_2}[l(1.5)] {B_1}="w1":_-{\delta_1}[l(1.5)] {B_0}="w0"(:_-{\gamma}@/_15pt/"w2", :^-{\beta}@/^15pt/[l(1.5)]{A}="v0":^-{\alpha}@/^15pt/"w1")
}
\end{equation}
to the diagram 
\begin{equation}\label{eq:in-delay-exD}
\functor{D}=\quad 
\xygraph{
{A}="v":^-{\delta_1\alpha}@/^/[r] {B_0}="w":^-{\beta}@/^/"v","w":^{\delta_1\delta_2\gamma}@(ur,dr)"w"
}.
\end{equation}

We shall construct an isomorphism $T_{(v,n)} : \Phi (\functor{D})_{(v,n)}\to \functor{E}_{(v,n)}$ for each $v\in G^0$ and each $n=0,1,\ldots,d(v)$ recursively, starting at $n=d(v)$. We have
\[
\Phi (\functor{D})_{(v,d(v))} = \bigsqcup_{\substack{e\in \target^{-1}(v)\\ d(e)=d(v)}} \functor{E}_{(\source(e),0)},
\]
and since $\target_d^{-1}(v,d(v))=\{e \ |\ e\in\target^{-1}(v),\ d(e)=d(v)\}$  the coproduct condition on $\functor{E}$ ensures that the map
\[
T_{(v,d(v))}\coloneqq \bigsqcup_{\substack{e\in \target^{-1}(v),\\ d(e)=d(v)}} \functor{E}_e : \bigsqcup_{\substack{e\in \target^{-1}(v)\\ d(e)=d(v)}} \functor{E}_{(\source(e),0)} \to \functor{E}_{(v,d(v))}
\]
is an isomorphism. Now supposing for $n<d(v)$ that we have constructed an isomorphism $T_{(v,n+1)}:\Phi (\functor{D})_{(v,n+1)} \to \functor{E}_{(v,n+1)}$, we construct $T_{(v,n)}$ as follows. We have
\[
\Phi (\functor{D})_{(v,n)} = \bigsqcup_{\substack{e\in \target^{-1}(v)\\ d(e)\geq n}} \functor{E}_{(\source(e),0)} = \left(\bigsqcup_{\substack{e\in \target^{-1}(v)\\ d(e)= n}} \functor{E}_{(\source(e),0)} \right) \sqcup \Phi (\functor{D})_{(v,n+1)},
\]
and we let $T_{(v,n)}: \Phi (\functor{D})_{(v,n)}\to \functor{E}_{(v,n)}$ be the map
\[
T_{(v,n)}\coloneqq \left( \bigsqcup_{\substack{e\in \target^{-1}(v)\\ d(e)= n}} \functor{E}_e \right) \sqcup \left( \functor{E}_{e_{v,n+1}}\circ T_{(v,n+1)}\right).
\]
Since $T_{(v,n+1)}$ is an isomorphism, and since $\target_d^{-1}(v,n)=\{e\in \target^{-1}(v)\ |\ d(e)=n\} \sqcup \{e_{v,n+1}\}$, the coproduct condition on $\functor{E}$ ensures that our $T_{(v,n)}$ is an isomorphism. To illustrate, if $\functor{E}$ and $\functor{D}$ are as in \eqref{eq:in-delay-exE} and \eqref{eq:in-delay-exD}, then  the isomorphisms $T_* : \Phi(\functor{D})_*\to \functor{E}_*$ are the dotted vertical arrows in the diagram
\[
\xygraph{
{B_0}="w2t"(:_-{\text{can}}[l(1.5)] {A\sqcup B_0}="w1t":_-{\id}[l(1.5)] {A\sqcup B_0}="w0t"(:_-{\delta_1\alpha\sqcup \delta_1\delta_2\gamma\,\,}@/_15pt/"w2t", :^-{\delta_1\alpha\sqcup\delta_1\delta_2\gamma}@/^15pt/[l(1.5)]{B_0}="v0t":^-{\beta\quad}@/^20pt/"w1t"),
:^-{\beta}@{.>}[d(2.5)]{B_2}="w2b":_-{\delta_2}[l(1.5)] {B_1}="w1b":_-{\delta_1}[l(1.5)] {B_0}="w0b"(:_-{\gamma}@/_15pt/"w2b", :^-{\beta}@/^15pt/[l(1.5)]{A}="v0b":^-{\alpha\quad}@/^20pt/"w1b")
), "v0t":_-{\beta}@{.>}"v0b", "w0t":^-{\delta_1\alpha\sqcup\delta_1\delta_2\gamma}@{.>}"w0b", "w1t":^-{\alpha\sqcup \delta_2\gamma} @{.>}"w1b"
}
\] 

The last thing to check is that the collection of morphisms 
\[
T=(T_{(v,n)} \ |\ v\in G^0,\ 0\leq n\leq d(v))
\] 
defines a morphism in $\CKL_{\category{C}}(G_d)$. For each edge of the form $e_{v,n}$ in $G_d$ the morphism $T_{(v,n-1)}\circ \Phi (\functor{D})_{e_{v,n}}$ is the composition
\[
\bigsqcup_{\substack{e\in \target^{-1}(v),\\ d(e)\geq n}} \functor{E}_{(\source(e),0)} \xrightarrow{\bigsqcup_e \id} \bigsqcup_{\substack{f\in \target^{-1}(v),\\ d(f)\geq n-1}} \functor{E}_{(\source(f),0)} \xrightarrow{T_{(v,n-1)}}\functor{E}_{(v,n-1)},
\]
and by the recursive definition of $T_{(v,n)}$ this composition is equal to $\functor{E}_{e_{v,n}}\circ T_{(v,n)}$. Thus $T_{\target_d(e_{v,n})}\circ \Phi (\functor{D})_{e_{v,n}} = \functor{E}_{e_{v,n}}\circ T_{\source_d(e_{v,n})}$. On the other hand, tracing through the definitions one finds that for each edge $e\in G^1$ the morphisms 
\[
\xymatrix@C=70pt
{
\displaystyle\bigsqcup_{f\in \target^{-1}\source(e)} \functor{E}_{(\source(f),0)} \ar@<3pt>[r]^-{T_{(\target(e),d(e))}\circ \Phi (\functor{D})_e} \ar@<-3pt>[r]_-{\functor{E}_e \circ T_{(\source(e),0)}} & \functor{E}_{(\target(e),d(e))}
}
\]
are both equal to 
\[
\bigsqcup_{f\in \target^{-1}\source(e)} \functor{E}_e \circ \functor{E}_{(\source(e),1)}\circ \cdots \circ \functor{E}_{(\source(e),d(f))} \circ \functor{E}_f,
\]
showing that $T_{\target_d(e)}\circ \Phi (\functor{D})_e = \functor{E}_e \circ T_{\source_d(e)}$ and completing the verification that $T$ is an isomorphism $\Phi (\functor{D})\to \functor{E}$ in $\CKL_{\category{C}}(G_d)$. Thus $\Phi $ is essentially surjective, and therefore an equivalence.
\end{proof}

\subsection{Out-splitting}

\begin{definition}\label{def:out-split}
Let $G=(G^0,G^1,\source, \target)$ be a directed graph, and let $p:G^0\sqcup G^1\to \N$ be a function such that
\begin{enumerate}[\rm(1)]
\item $p(e)\leq p(\source(e))$ for all $e\in G^1$, and
\item if $v\in G^0$ is  a source then $p(v)=0$.
\end{enumerate}
Define a new directed graph $G_{\mathrm{os},p} = (G_{\mathrm{os},p}^0, G_{\mathrm{os},p}^1, \source_{\mathrm{os},p}, \target_{\mathrm{os},p})$, called the \emph{out-splitting} of $G$ determined by the function $p$, as follows: 
\[
\begin{aligned}
& G_{\mathrm{os},p}^0 \coloneqq \{ (v,n)\ |\ v\in G^0, \ 0\leq n \leq p(v)\}\\
& G_{\mathrm{os},p}^1 \coloneqq \{ (e,n)\ |\ e\in G^1,\ 0 \leq n\leq p(\target(e)) \} \\
& \source_{\mathrm{os},p}(e,n) = (\source(e),p(e)),\quad \target_{\mathrm{os},p}(e,n) = (\target(e), n).
\end{aligned}
\]
\end{definition}

For example, if $G$ is the directed graph
\begin{equation}\label{eq:os-ex1}
\xygraph{
{v}="v":^-{e}@/^/[r] {w}="w":^-{f}@/^/"v","w":^{g}@(ur,dr)"w"
}
\end{equation}
and $p:G^0\sqcup G^1 \to \N$ is the function
\[
p(v)=p(e)=p(g)=0,\qquad p(w)=p(f)=1
\]
then $G_{\mathrm{os},p}$ is the graph
\begin{equation}\label{eq:os-ex2}
\xygraph{
{(v,0)}="v0" ([r(2)][u(.7)]{(w,0)}="w0", [r(2)][d(.7)]{(w,1)}="w1"), "v0":^-{(e,1)}@/^/"w1" :^-{(f,0)}@/^/"v0" :^-{(e,0)}@/^/"w0" :^-{(g,0)}@`{p+(1,1),p+(1,-1)}"w0" :^-{(g,1)}"w1"
}
\end{equation}

\begin{theorem}\label{thm:out-split}
Let $G$ be a directed graph, and let $G_{\mathrm{os},p}$ be an out-splitting of $G$ as in Definition \ref{def:in-delay}. For each category $\category{C}$ there is an equivalence 
\(
\CKL_{\category{C}}(G) {\cong} \CKL_{\category{C}}(G_{\mathrm{os},p}).
\)
\end{theorem}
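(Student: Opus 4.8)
The plan is to imitate the proofs of Theorems \ref{thm:out-delay} and \ref{thm:heads} by defining a ``constant in the splitting index'' functor $\Phi: \CKL_{\category{C}}(G) \to \CKL_{\category{C}}(G_{\mathrm{os},p})$ via
\[
\Phi(\functor{D})_{(v,n)} := \functor{D}_v, \qquad \Phi(\functor{D})_{(e,n)} := \functor{D}_e, \qquad \Phi(T)_{(v,n)} := T_v,
\]
and then checking that $\Phi$ is full, faithful, and essentially surjective. The structural fact underpinning everything is that the edges of $G_{\mathrm{os},p}$ with target $(v,n)$ are exactly the edges $(e,n)$ with $e\in\target^{-1}(v)$, and these have sources $(\source(e),p(e))$; thus the in-neighbourhood of $(v,n)$ is a copy of the in-neighbourhood of $v$ in $G$ that does \emph{not} depend on $n$. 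In particular $(v,n)$ is a source in $G_{\mathrm{os},p}$ if and only if $v$ is a source in $G$, and for non-source $v$ the coproduct condition for $\Phi(\functor{D})$ at $(v,n)$ is literally the coproduct condition for $\functor{D}$ at $v$. Hence $\Phi(\functor{D})$ inherits the coproduct condition and forms no new coproducts, which is why no hypothesis on $\category{C}$ is needed. Functoriality is immediate, and faithfulness follows because each component $T_v$ reappears as $\Phi(T)_{(v,0)}$.

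For fullness, given $S:\Phi(\functor{D})\to\Phi(\functor{E})$, naturality of $S$ along the edge $(e,n)$ reads $S_{(\target(e),n)}\circ\functor{D}_e = \functor{E}_e\circ S_{(\source(e),p(e))}$, and the right-hand side is independent of $n$. Fixing a non-source vertex $v$ and letting $e$ range over $\target^{-1}(v)$, the two morphisms $S_{(v,n)},S_{(v,m)}:\functor{D}_v\to\functor{E}_v$ therefore agree after precomposition with every canonical map $\functor{D}_e$ into the coproduct $\functor{D}_v$; the uniqueness clause of the universal property then forces $S_{(v,n)}=S_{(v,m)}$ (for source $v$ only the index $0$ occurs). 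So $S_{(v,n)}$ depends only on $v$, and $T_v := S_{(v,0)}$ gives a morphism of $\CKL_{\category{C}}(G)$ with $\Phi(T)=S$.

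Essential surjectivity is where the bookkeeping lives. The same observation shows that for non-source $v$ all the objects $\functor{E}_{(v,n)}$ with $0\leq n\leq p(v)$ are coproducts of the one family $(\functor{E}_{(\source(e),p(e))}\mid e\in\target^{-1}(v))$, with canonical maps $\functor{E}_{(e,n)}$; I write $\iota_{v,n,m}:\functor{E}_{(v,n)}\xrightarrow{\cong}\functor{E}_{(v,m)}$ for the resulting canonical isomorphism, characterised by $\iota_{v,n,m}\circ\functor{E}_{(e,n)}=\functor{E}_{(e,m)}$, with the convention $\iota_{v,0,0}=\id$. Given $\functor{E}$ I set $\functor{D}_v:=\functor{E}_{(v,0)}$ and $\functor{D}_e:=\functor{E}_{(e,0)}\circ\iota_{\source(e),0,p(e)}$ (the isomorphism being defined since $p(e)\geq 1$ forces $p(\source(e))\geq 1$, hence $\source(e)$ non-source, while $p(e)=0$ makes $\iota=\id$). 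Using that a coproduct of isomorphisms is an isomorphism, one checks $\functor{D}$ satisfies the coproduct condition; and the components $T_{(v,n)}:=\iota_{v,0,n}$ assemble into an isomorphism $\Phi(\functor{D})\xrightarrow{\cong}\functor{E}$, since naturality along $(e,n)$ collapses exactly to the defining identity $\iota_{\target(e),0,n}\circ\functor{E}_{(e,0)}=\functor{E}_{(e,n)}$.

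I expect the main obstacle to be organising the canonical isomorphisms $\iota_{v,n,m}$ cleanly enough that the fullness and essential-surjectivity verifications do not dissolve into opaque index-chasing. The guiding principle is that every such check should be discharged by invoking the \emph{uniqueness} part of the universal property of the coproducts $\functor{D}_v$ and $\functor{E}_{(v,n)}$, rather than by manipulating elements; the splitting-index arithmetic is then purely formal.
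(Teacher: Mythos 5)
Your proposal is correct and follows essentially the same route as the paper's proof: the same ``constant in the splitting index'' functor $\Phi$, the same fullness argument (showing $S_{(v,n)}$ is independent of $n$ via the coproduct structure at $v$), and the same essential-surjectivity construction, with your canonical isomorphisms $\iota_{v,0,n}$ being exactly the paper's comparison maps $T_{(v,n)}$ defined by $T_{(v,n)}\circ\functor{E}_{(e,0)}=\functor{E}_{(e,n)}$. The only difference is cosmetic: you phrase the verifications through the uniqueness clause of the universal property where the paper phrases them through invertibility of the induced maps $\bigsqcup_e\functor{E}_{(e,n)}$, and these amount to the same thing.
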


\begin{proof}
To simplify the notation we will write $G_p$ instead of $G_{\mathrm{os},p}$.

For each diagram $\functor{D}$  in $\CKL_{\category{C}}(G)$ we define a diagram $\Phi (\functor{D})$ in $\CKL_{\category{C}}(G_p)$ by
\[
\Phi (\functor{D})_{(v,n)}\coloneqq \functor{D}_v \quad \text{and}\quad \Phi (\functor{D})_{(e,n)} \coloneqq \functor{D}_e.
\]
Since each vertex $(v,n)$ of $G_p$ has $\target_p^{-1}(v,n) = \{ (e,n)\ |\ e\in \target^{-1}(v)\}$, the diagram $\CKL_{\category{C}}(G)$ is easily seen to inherit the coproduct property from $\functor{D}$.
For the graphs $G$ and $G_p$ in \eqref{eq:os-ex1} and \eqref{eq:os-ex2}, for instance, $\Phi$ sends each diagram
\[
\xygraph{
{X}="v":^-{\phi}@/^/[r] {Y}="w":^-{\psi}@/^/"v","w":^{\rho}@(ur,dr)"w"
}
\]
to the diagram
\[
\xygraph{
{X}="v0" ([r(2)][u(.7)]{Y}="w0", [r(2)][d(.7)]{Y}="w1"), "v0":^-{\phi}@/^/"w1" :^-{\psi}@/^/"v0" :^-{\phi}@/^/"w0" :^-{\rho}@`{p+(1,1),p+(1,-1)}"w0" :^-{\rho}"w1"
}.
\]

To make $\Phi $ into a functor we define, for each morphism $T:\functor{D}\to \functor{E}$ in $\CKL_{\category{C}}(G)$, a morphism $\Phi (T) : \Phi (\functor{D})\to \Phi (\functor{E})$ by
\[
\Phi (T)_{(v,n)} \coloneqq T_v : \functor{D}_v \to \functor{E}_v.
\]
The fact that $\Phi (T)$ is a morphism of diagrams follows easily from the corresponding property of $T$.  
Since each component $T_v$ of the morphism $T$ appears as a component $\Phi(T)_{(v,0)}$ of the morphism $\Phi (T)$, the functor $\Phi $ is faithful.

To see that $\Phi $ is full, let $\functor{D}$ and $\functor{E}$ be objects in $\CKL_{\category{C}}(G)$, and let $S: \Phi (\functor{D})\to \Phi (\functor{E})$ be a morphism in $\CKL_{\category{C}}(G_p)$. For each vertex $v\in G^0$ that is not a source, each $n\in \{0,1,\ldots,p(n)\}$, and each $e\in \target^{-1}(v)$ the equality $S_{\target_p(e,n)} \circ \Phi (\functor{D})_{(e,n)} = \Phi (\functor{E})_{(e,n)} \circ S_{\source_p(e,n)}$ implies that $S_{(v,n)}\circ \functor{D}_e = \functor{E}_e \circ S_{(\source(e),p(e))}$. Taking a coproduct over $e\in \target^{-1}(v)$ shows that the diagram
\[
\xymatrix@C=50pt
{
\displaystyle\bigsqcup_{e\in \target^{-1}(v)} \functor{D}_{\source(e)} \ar[r]^-{\bigsqcup_{e} \functor{D}_e} \ar[d]_-{\bigsqcup_{e} S_{(\source(e),p(e))}} 
& \functor{D}_v \ar[d]^-{S_{(v,n)}} \\
\displaystyle \bigsqcup_{e\in \target^{-1}(v)} \functor{E}_{\source(e)} \ar[r]^-{\bigsqcup_{e} \functor{E}_e}
&
\functor{E}_v
}
\]
commutes. Since the horizontal arrows are isomorphisms and everything but $S_{(v,n)}$ is independent of $n$, we conclude that 
\begin{equation}\label{eq:in-split-indep-n}
S_{(v,n)}=S_{(v,m)}\quad \text{for all $n,m\in \{0,1,\ldots,p(v)\}$.}
\end{equation}
On the other hand, if $v\in G^0$ is a source then we required that $p(v)=0$ and so the equality \eqref{eq:in-split-indep-n} is clearly satisfied. With this equality in hand it is easily checked that setting $T_v\coloneqq S_{(v,0)}$ for each $v\in G^0$ yields a morphism $T:\functor{D}\to \functor{E}$ in $\CKL_{\category{C}}(G)$ such that $\Phi (T)=S$, and so $\Phi $ is full.

Finally, to show that $\Phi $ is essentially surjective, let $\functor{E}$ be an object of $\CKL_{\category{C}}(G_p)$. For each vertex $v\in G^0$ that is not a source, and each  $n\in \{0,1,\ldots,p(v)\}$, we let $T_{(v,n)} : \functor{E}_{(v,0)} \to \functor{E}_{(v,n)}$ be the isomorphism in $\category{C}$ making the diagram
\begin{equation}\label{eq:out-split-psi}
\xymatrix@C=60pt{
\displaystyle \bigsqcup_{e\in \target^{-1}(v)} \functor{E}_{(\source(e),p(e))} \ar[r]_{\cong}^-{\bigsqcup_e \functor{E}_{(e,0)}} \ar[dr]_-{\bigsqcup_e \functor{E}_{(e,n)}}^-{\cong} & \functor{E}_{(v,0)} \ar[d]^-{T_{(v,n)}} \\
& \functor{E}_{(v,n)}
}
\end{equation}
commute. For each source $v\in G^0$ we let $T_{(v,0)}:\functor{E}_{(v,0)}\to \functor{E}_{(v,0)}$ be the identity map. We then let $\functor{D}$ be the diagram of shape $G$ given by
\[
\functor{D}_v = \functor{E}_{(v,0)} \quad \text{and}\quad \functor{D}_e = \functor{E}_{(e,0)}\circ T_{(\source(e),p(e))}.
\]
The diagram $\functor{D}$ satisfies the coproduct condition, since at each vertex $v\in G^0$ that is not a source the morphism $\bigsqcup_{e\in \target^{-1}(v)} \functor{D}_e : \bigsqcup_{e\in \target^{-1}(v)}\functor{D}_{\source(e)} \to \functor{D}_v$ is the composition
\[
\bigsqcup_{e\in \target^{-1}(v)} \functor{E}_{(\source(e),0)} \xrightarrow{\bigsqcup_e T_{(\source(e),p(e))}} \bigsqcup_{e\in \target^{-1}(v)} \functor{E}_{(\source(e),p(e))} \xrightarrow{\bigsqcup_e \functor{E}_{(e,0)}} \functor{E}_{(v,0)}
\]
where the first arrow is an isomorphism by definition of the $T$s, and the second arrow is an isomorphism because $\functor{E}$ satisfies the coproduct condition. 

Finally, we will show that the isomorphisms
\[
\Phi (\functor{D})_{(v,n)} = \functor{E}_{(v,0)} \xrightarrow{T_{(v,n)}} \functor{E}_{(v,n)},
\]
defined for each vertex $(v,n)\in G_p^0$, assemble into a  morphism $T:\Phi (\functor{D})\to \functor{E}$ in $\CKL_{\category{C}}(G_p)$, which will show that our functor $\Phi $ is essentially surjective and thus an equivalence. We must check that for each $e\in G^1$ and each $n\in \{0,1,\ldots,p(\target(e))\}$ we have $\functor{E}_{(e,n)}\circ T_{(\source(e),p(e))} = T_{(\target(e),n)} \circ \Phi (\functor{D})_{(e,n)}$. Since $\Phi (\functor{D})_{(e,n)} = \functor{E}_{(e,0)}\circ T_{(\source(e),p(e))}$ and $T_{(\source(e),p(e))}$ is an isomorphism, the question becomes whether $\functor{E}_{(e,n)} = T_{(\target(e),n)} \circ \functor{E}_{(e,0)}$. This last equality is an immediate consequence of the definition \eqref{eq:out-split-psi} of $T_{(\target(e),n)}$, and so $T$ is indeed a morphism in $\CKL_{\category{C}}(G_p)$.
\end{proof}

\subsection{In-splitting}\label{subsec:in-split}

\begin{definition}\label{def:in-split}
Let $G=(G^0,G^1,\source, \target)$ be a directed graph, and let $p:G^0\sqcup G^1\to \N$ be a function such that
\begin{enumerate}[\rm(1)]
\item $p(e)\leq p(\target(e))$ for all $e\in G^1$,
\item if $v\in G^0$ is a source then $p(v)=0$, and
\item if $v\in G^0$ is not a source then the function $p:\target^{-1}(v)\to \{0,1,\ldots,p(v)\}$ is surjective.
\end{enumerate}
Define a new graph $G_{\mathrm{is},p}=(G^0_{\mathrm{is},p}, G^1_{\mathrm{is},p}, \source_{\mathrm{is},p},\target_{\mathrm{is},p})$, called the \emph{in-splitting} of $G$ determined by the function $p$, as follows: 
\[
\begin{aligned}
& G^0_{\mathrm{is},p} \coloneqq \{ (v,n)\ |\ v\in G^0,\ 0\leq n\leq p(v) \} \\
& G^1_{\mathrm{is},p} \coloneqq \{ (e,n)\ |\ e\in G^1,\ 0\leq n\leq p(\source(e)) \}\\
& \source_{\mathrm{is},p}(e,n)=(\source(e),n),\quad \target_{\mathrm{is},p}(e,n) = (\target(e),p(e)).
\end{aligned}
\]
\end{definition}

For example, if $G$ is the directed graph
\begin{equation}\label{eq:is-ex1}
\xygraph{
{v}="v":^-{e}@/^/[r] {w}="w":^-{f}@/^/"v","w":^{g}@(ur,dr)"w"
}
\end{equation}
and $p:G^0\sqcup G^1 \to \N$ is the function
\[
p(v)=p(f)=p(g)=0,\qquad p(w)=p(e)=1
\]
then $G_{\mathrm{is},p}$ is the graph
\begin{equation}\label{eq:is-ex2}
\xygraph{
{(v,0)}="v0" ([r(2)][u(.7)]{(w,0)}="w0", [r(2)][d(.7)]{(w,1)}="w1"), "w1":_-{(g,1)}"w0" :^-{(g,0)}@`{p+(1,1),p+(1,-1)}"w0":_-{(f,0)}@/_/"v0":^-{(e,0)}@/^/"w1" :^-{(f,1)}@/^/"v0"
}
\end{equation}

\begin{theorem}\label{thm:in-split}
Let $G_{\mathrm{is},p}$ be an in-splitting of a directed graph $G$ as in Definition \ref{def:in-split}, where $G$ has no infinite recievers. For each category $\category{C}$ with binary coproducts there is an equivalence 
\(
 \CKL_{\category{C}}(G) {\cong} \CKL_{\category{C}}(G_{\mathrm{is},p}).
\)
\end{theorem}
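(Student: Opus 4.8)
The plan is to adapt the method used for in-delays (Theorem~\ref{thm:in-delay}); the guiding observation is that in-splitting regroups, at each non-source vertex $v$, the edges of $\target^{-1}(v)$ according to the partition determined by the values of $p$. I first reduce to the case in which $G$ has no sources. As in the proof of Theorem~\ref{thm:in-delay}, adding a head at each source of $G$ (Definition~\ref{def:heads}) and extending $p$ by $0$ on the new vertices and edges produces a graph $\widehat{G}$ that has no sources and no infinite receivers and on which the extended $p$ still satisfies the conditions of Definition~\ref{def:in-split}; a direct inspection of the two constructions gives $(\widehat{G})_{\mathrm{is},p}=\widehat{(G_{\mathrm{is},p})}$, so Theorem~\ref{thm:heads} yields a chain of equivalences reducing the statement to graphs with no sources. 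Assuming this, $G_{\mathrm{is},p}$ also has no sources, and for each $v$ and each $0\le n\le p(v)$ the index set $\{e\in\target^{-1}(v)\mid p(e)=n\}$ is finite (no infinite receivers) and nonempty (condition~(3) of Definition~\ref{def:in-split}), so every coproduct used below exists in $\category{C}$.

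I then build a functor $\Phi\colon\CKL_{\category{C}}(G)\to\CKL_{\category{C}}(G_{\mathrm{is},p})$ by setting
\[
\Phi(\functor{D})_{(v,n)}\coloneqq\bigsqcup_{\substack{e\in\target^{-1}(v)\\ p(e)=n}}\functor{D}_{\source(e)} .
\]
Regrouping the coproduct condition at $v$ by the value of $p$ realises $\functor{D}_v$ as $\bigsqcup_{n=0}^{p(v)}\Phi(\functor{D})_{(v,n)}$; let $\iota^{\functor{D}}_{(v,n)}\colon\Phi(\functor{D})_{(v,n)}\to\functor{D}_v$ be the canonical morphism from this partial coproduct, and let $\kappa^{\functor{D}}_{e}\colon\functor{D}_{\source(e)}\to\Phi(\functor{D})_{(\target(e),p(e))}$ be the canonical morphism labelled by $e$, so that $\functor{D}_e=\iota^{\functor{D}}_{(\target(e),p(e))}\circ\kappa^{\functor{D}}_{e}$. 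I define $\Phi(\functor{D})_{(e,m)}\coloneqq\kappa^{\functor{D}}_{e}\circ\iota^{\functor{D}}_{(\source(e),m)}$ and $\Phi(T)_{(v,n)}\coloneqq\bigsqcup_{p(e)=n}T_{\source(e)}$ (functoriality being routine). Because $\kappa^{\functor{D}}_{e}=\bigsqcup_{m}\Phi(\functor{D})_{(e,m)}$, the map induced at $(v,n)$ by the incoming edges is exactly the canonical isomorphism realising $\Phi(\functor{D})_{(v,n)}$ as $\bigsqcup_{p(e)=n}\functor{D}_{\source(e)}$, so $\Phi(\functor{D})$ satisfies the coproduct condition. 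Faithfulness follows since $T_v=\bigsqcup_n\Phi(T)_{(v,n)}$ recovers every component of $T$.

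Fullness is the step I expect to be the main obstacle, as a morphism $S\colon\Phi(\functor{D})\to\Phi(\functor{E})$ carries no visible compatibility with the coproduct decompositions until one is proved. Given $S$, set $T_v\coloneqq\bigsqcup_nS_{(v,n)}$. The crux is the identity
\[
S_{(\target(e),p(e))}\circ\kappa^{\functor{D}}_{e}=\kappa^{\functor{E}}_{e}\circ T_{\source(e)}\qquad(e\in G^1),
\]
which I prove by precomposing both sides with each $\iota^{\functor{D}}_{(\source(e),m)}$ and invoking the morphism relation $S_{(\target(e),p(e))}\circ\Phi(\functor{D})_{(e,m)}=\Phi(\functor{E})_{(e,m)}\circ S_{(\source(e),m)}$ together with the factorisations of $\Phi(\functor{D})_{(e,m)}$ and $\Phi(\functor{E})_{(e,m)}$ and the definition of $T_{\source(e)}$. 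Since a morphism out of the coproduct $\Phi(\functor{D})_{(v,n)}$ is determined by its composites with the $\kappa^{\functor{D}}_{e}$, this identity forces $\Phi(T)_{(v,n)}=S_{(v,n)}$; and composing it with the factorisations $\functor{D}_e=\iota^{\functor{D}}_{(\target(e),p(e))}\circ\kappa^{\functor{D}}_{e}$ and $\functor{E}_e=\iota^{\functor{E}}_{(\target(e),p(e))}\circ\kappa^{\functor{E}}_{e}$ and the definition of $T_{\target(e)}$ yields $T_{\target(e)}\circ\functor{D}_e=\functor{E}_e\circ T_{\source(e)}$. Thus $T$ is a morphism of diagrams with $\Phi(T)=S$.

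For essential surjectivity I send a diagram $\functor{E}$ on $G_{\mathrm{is},p}$ to the diagram $\functor{D}$ with $\functor{D}_v\coloneqq\bigsqcup_{n=0}^{p(v)}\functor{E}_{(v,n)}$ and with $\functor{D}_e$ assembled from the maps $\functor{E}_{(e,m)}$ and the relevant canonical morphisms. Regrouping the coproduct conditions of $\functor{E}$ shows that $\functor{D}$ satisfies the coproduct condition and that
\[
\Phi(\functor{D})_{(v,n)}=\bigsqcup_{\substack{e\in\target^{-1}(v)\\ p(e)=n}}\ \bigsqcup_{0\le m\le p(\source(e))}\functor{E}_{(\source(e),m)}
\]
is carried isomorphically onto $\functor{E}_{(v,n)}$ by the coproduct-condition isomorphism of $\functor{E}$ at $(v,n)$. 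A routine verification that these isomorphisms commute with the edge maps $\functor{E}_{(e,m)}$ assembles them into an isomorphism $\Phi(\functor{D})\cong\functor{E}$, so $\Phi$ is an equivalence.
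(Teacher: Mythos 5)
Your proposal is correct and takes essentially the same route as the paper's proof: the same reduction to source-free graphs via Theorem \ref{thm:heads}, the same functor $\Phi$ (your $\kappa^{\functor{D}}_e \circ \iota^{\functor{D}}_{(\source(e),m)}$ is exactly the paper's $\bigsqcup_{f\in\target^{-1}\source(e),\, p(f)=m}\functor{D}_f$ followed by the canonical injection), and the same constructions of $T$ in the fullness and essential-surjectivity steps. The only difference is expository: you explicitly verify fullness via the identity $S_{(\target(e),p(e))}\circ\kappa^{\functor{D}}_e = \kappa^{\functor{E}}_e\circ T_{\source(e)}$, a check the paper leaves to the reader as ``a straightforward matter.''
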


\begin{proof}
We write $G_p$ instead of $G_{\mathrm{is},p}$. As in the proof of Theorem \ref{thm:in-delay} we may assume without loss of generality that $G$ has no sources: add a head at each source using Theorem \ref{thm:heads}, and extend the function $p$ to the new graph by setting $p=0$ on the added vertices and edges. 

For each diagram $\functor{D}$ in $\CKL_{\category{C}}(G)$ we define a diagram $\Phi (\functor{D})$ of shape $G_p$ by setting
\[
\Phi (\functor{D})_{(v,n)}\coloneqq \bigsqcup_{\substack{e\in\target^{-1}(v),\\ p(e)=n}} \functor{D}_{\source(e)}
\]
for each $(v,n)\in G^0_p$, and
\begin{equation}\label{eq:in-split-ess-eq-1} 
\Phi (\functor{D})_{(e,n)} \coloneqq \bigsqcup_{\substack{f\in \target^{-1}\source(e),\\ p(f)=n}} \functor{D}_f : \bigsqcup_{\substack{f\in \target^{-1}\source(e),\\ p(f)=n}} \functor{D}_{\source(f)} \to \bigsqcup_{\substack{g\in \target^{-1}\source(e),\\ p(g)=p(e)}} \functor{D}_{\source(g)}
\end{equation}
for each $(e,n)\in G_p^1$. For the graphs $G$ and $G_p$ in \eqref{eq:is-ex1} and \eqref{eq:is-ex2}, for instance, $\Phi$ sends each diagram
\[
\xygraph{
{X}="v":^-{\phi}@/^/[r] {Y}="w":^-{\psi}@/^/"v","w":^{\rho}@(ur,dr)"w"
}
\]
to the diagram
\[
\xygraph{
{Y}="v0" ([r(2)][u(.7)]{Y}="w0", [r(2)][d(.7)]{X}="w1"), "w1":_-{\phi}"w0" :^-{\rho}@`{p+(1,1),p+(1,-1)}"w0":_-{\rho}@/_/"v0":^-{\psi}@/^/"w1" :^-{\phi}@/^/"v0"
}.
\]
Note that the surjectivity condition (3) in Definition \ref{def:in-split}, along with our assumption that $G$ has no sources or infinite receivers, ensures that all of the coproducts appearing in the definition of $\Phi$ are over nonempty finite sets. Note too that $\Phi (\functor{D})_{(e,n)}$ is well-defined as a morphism in $\category{C}$ from $\Phi (\functor{D})_{(\source(e),n)}$ to $\Phi (\functor{D})_{(\target(e),p(e))}$, since each $\functor{D}_f$ maps $\functor{D}_{\source(f)}$ to $\functor{D}_{\source(e)}$, and $e$ is one of the indices $g$ appearing in the right-hand coproduct. The diagram $\Phi (\functor{D})$ is easily seen to satisfy the coproduct condition, and thus is an object of $\CKL_{\category{C}}(G_p)$.

For each morphism $T:\functor{D}\to \functor{E}$ in $\CKL_{\category{C}}(G)$ we let $\Phi (T)$ be the morphism in $\CKL_{\category{C}}(G_p)$ from $\Phi (\functor{D})$ to $\Phi (\functor{E})$ defined at each vertex $(v,n)\in G_p^0$ by
\[
\Phi (T)_{(v,n)} \coloneqq \bigsqcup_{\substack{e\in \target^{-1}(v),\\ p(e)=n}} T_{\source(e)} : \bigsqcup_{\substack{e\in \target^{-1}(v),\\ p(e)=n}} \functor{D}_{\source(e)} \to \bigsqcup_{\substack{e\in \target^{-1}(v),\\ p(e)=n}} \functor{E}_{\source(e)}.
\]
In this way we obtain a functor $\Phi :\CKL_{\category{C}}(G)\to \CKL_{\category{C}}(G_p)$.

An argument like the one in the proof of Theorem \ref{thm:in-delay} shows that $\Phi $ is faithful. To see that this functor is full, let $\functor{D}$ and $\functor{E}$ be objects in $\CKL_{\category{C}}(G)$, and let $S:\Phi (\functor{D})\to \Phi (\functor{E})$ be a morphism in $\CKL_{\category{C}}(G_p)$. Noting that for each $v\in G^0$ we have
\[
\bigsqcup_{0\leq n\leq p(v)} \Phi (\functor{D})_{(v,n)} = \bigsqcup_{e\in \target^{-1}(v)} \functor{D}_{\source(e)}
\]
(along with an analogous equality for $\functor{E}$), we let $T_v : \functor{D}_v\to \functor{E}_v$ be the morphism in $\category{C}$ making the diagram
\[
\xymatrix@C=50pt
{
\displaystyle\bigsqcup_{e\in \target^{-1}(v)} \functor{D}_{\source(e)} \ar[d]_-{\bigsqcup_e S_{(v,n)}} \ar[r]^-{\bigsqcup_e \functor{D}_e}_-{\cong} & \functor{D}_v \ar[d]^-{T_v} \\
\displaystyle \bigsqcup_{e\in \target^{-1}(v)} \functor{E}_{\source(e)}  \ar[r]^-{\bigsqcup_e \functor{E}_e}_-{\cong} & \functor{E}_v
}
\]
commute. It is a straightforward matter to check that the collection of morphisms $(T_v\ |\ v\in G^0)$ defines a morphism $T:\functor{D}\to \functor{E}$ in $\CKL_{\category{C}}(G)$, such that $\Phi (T)=S$. Thus $\Phi $ is full.

Finally, to show that $\Phi $ is essentially surjective, we let $\functor{E}$ be an object of $\CKL_{\category{C}}(G_p)$, and for each vertex $v\in G^0$ we define
\[
\functor{D}_v \coloneqq \bigsqcup_{0\leq n\leq p(v)} \functor{E}_{(v,n)},
\]
while for each edge $e\in G^1$ we define
\begin{equation}\label{eq:in-split-ess-eq-2}
\functor{D}_e \coloneqq \bigsqcup_{0\leq n\leq p(\source(e))} \functor{E}_{(e,n)} : \bigsqcup_{0\leq n\leq p(\source(e))} \functor{E}_{(\source(e),n)} \to \bigsqcup_{0\leq m\leq p(\target(e))} \functor{E}_{(\target(e),m)}.
\end{equation}
(Note that $\functor{E}_{(e,n)}$ maps $\functor{E}_{(\source(e),n)}$ to $\functor{E}_{(\target(e),p(e))}$, where $p(e)$ is one of the indices $m$ in the coproduct on the right-hand side.) It is easily checked that this diagram $\functor{D}$ satisfies the coproduct condition. To see that $\Phi (\functor{D})\cong \functor{E}$, we note that for each vertex $(v,n)\in G_p^0$ we have
\[
\Phi (\functor{D})_{(v,n)} = \bigsqcup_{\substack{e\in \target^{-1}(v),\\ p(e)=n, \\ 0\leq m\leq p(\source(e))}} \functor{E}_{(\source(e),m)}.
\]
The coproduct on the right-hand side is over the set 
\[
\target_p^{-1}(v,n)=\{(e,m)\ |\ e\in \target^{-1}(v),\ p(e)=n,\ 0\leq m\leq p(\source(e))\},
\] 
and so the coproduct condition on $\functor{E}$ ensures that the map
\[
T_{(v,n)}\coloneqq \bigsqcup_{\substack{e\in \target^{-1}(v),\\ p(e)=n, \\ 0\leq m\leq p(\source(e))}} \functor{E}_{(e,m)} : \Phi (\functor{D})_{(v,n)} \to \functor{E}_{(v,n)}
\] 
is an isomorphism. A straightforward verification using the definitions \eqref{eq:in-split-ess-eq-1} and \eqref{eq:in-split-ess-eq-2} shows that the collection of morphisms $(T_{(v,n)}\ |\ (v,n)\in G_p^0)$ defines an isomorphism $T:\Phi (\functor{D})\to \functor{E}$, and so $\Phi $ is essentially surjective.
\end{proof}

\subsection{Desingularisation}\label{subsec:desingularisation} 

Here is an example of a graphical construction that is known to yield Morita equivalences of Leavitt path algebras, but which does not yield equivalences of the categories $\CKL_{\category{C}}$ in general, even when $\category{C}$ is assumed to have all coproducts.

Consider these two graphs:
\[
H=\
\xygraph{
[u(.5)]{\closed}="dot" [d]{\closed}="v":^-{\infty}"dot"
}\qquad\qquad
G=\ 
\xygraph{
[u(.5)]{\closed}="y"([d]{\closed}="x",[r]{\closed}="b1" [r]{\closed}="b2" [r]{\closed}="b3" [r]{\cdots}="dots"), "x":"y", "x":"b1":"y", "x":"b2":"b1", "x":"b3":"b2", "x":_-*{\cdots}"dots":"b3"
}
\]
where the label $\infty$ in $H$  indicates that there are countably infinitely many edges from the lower vertex to the upper vertex. Note that $G$ is the graph from Remark \ref{remark:P-infinite}.

The graphs $H^{\opp}$ and $G^{\opp}$ are related by the \emph{desingularisation} construction of \cite{Drinen-Tomforde}, which was shown in \cite[Theorem]{Abrams-desingularisation} to yield Morita equivalences of Leavitt path algebras. Combining this fact with Theorem \ref{thm:LPA-equivalence} we obtain a chain of equivalences
\[
\CKL_{\Mod(\mathbb{F})}(G) \cong \Mod(L_{\mathbb{F}}(G^{\opp})) \cong \Mod(L_{\mathbb{F}}(H^{\opp})) \cong 
\CKL_{\Mod(\mathbb{F})}(H_+)
\]
for each field $\mathbb{F}$. 

On the other hand, if $\category{P}$ is the category associated to a partially ordered set $(P,\leq)$ having all finite suprema, then Corollary \ref{cor:P-finite} gives $\CKL_{\category{P}}(H_+) \cong \category{P}^2$, while we observed in Remark \ref{remark:P-infinite} that $\CKL_{\category{P}}(G)\cong \category{Arr}(\category{P})$, and that there are many examples of partially ordered sets (having all suprema) for which $\category{Arr}(\category{P}) \not\cong \category{P}^2$. For such $\category{P}$ we will have $\CKL_{\category{P}}(G)\not\cong \CKL_{\category{P}}(H_+)$.

\subsection{The invariants of Parry-Sullivan and Bowen-Franks}\label{subsec:PSBF}

Let $G$ be a finite directed graph. The \emph{adjacency matrix} $A_G$ of $G$ is the $\#G^0\times \#G^0$ matrix whose $(v,w)$ entry is equal to the number of edges in $G$ with source $v$ and target $w$. Writing down such a matrix involves choosing an ordering of $G^0$, but nothing essential will depend on that choice. The graph $G$ is called \emph{irreducible} if for each $(v,w)\in G^0\times G^0$ there is a directed path in $G$ with source $v$ and target $w$ (see the discussion preceding Corollary \ref{cor:P-finite}); and if $G$ is irreducible then it is called \emph{non-trivial} if $A_G$ is not a permutation matrix.

\begin{definition}
If $G$ is a finite directed graph with $n$ vertices then the \emph{Parry-Sullivan number} $\PS(G)$ and the \emph{Bowen-Franks group} $\BF(G)$ are defined by
\[
\PS(G)\coloneqq \det(I_n-A_G) \qquad \text{and}\qquad \BF(G) \coloneqq \Z^n / (I_n-A_G)\Z^n
\]
where $I_n$ is the $n\times n$ identity matrix.
\end{definition}

Franks proved in \cite{Franks} that if two irreducible, non-trivial directed graphs have equal Parry-Sullivan numbers and isomorphic Bowen-Franks groups, then each of those graphs can be transformed into the other by a sequence of out-/in-delays, out-/in-splittings, and the inverses of these moves. In \cite{Abrams-FE} it was shown that these graph moves preserve Morita equivalence of Leavitt path algebras, and consequently that if the Parry-Sullivan and Bowen-Franks invariants of two irreducible, non-trivial graphs coincide, then the Leavitt path algebras of those graphs (over an arbitrary field) are Morita equivalent. In Theorems \ref{thm:out-delay}, \ref{thm:in-delay}, \ref{thm:out-split}, and \ref{thm:in-split}  we have shown that for finite graphs the moves in question in fact give equivalences of $\CKL_{\category{C}}$ categories whenever $\category{C}$ has binary coproducts, and so we obtain:

\begin{corollary}\label{cor:PSBF}
Let $G$ and $H$ be irreducible, non-trivial finite directed graphs. If $\PS(G)=\PS(H)$ and $\BF(G)\cong \BF(H)$ then $\CKL_{\category{C}}(G) \cong \CKL_{\category{C}}(H)$ for all categories $\category{C}$ with binary coproducts.\hfill\qed
\end{corollary}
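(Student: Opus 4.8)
The plan is to derive the corollary as an immediate composition of equivalences, using Franks's classification theorem to produce the moves and the four equivalence theorems of this section to turn each move into an equivalence of diagram categories. By the theorem of Franks cited above, the hypotheses $\PS(G)=\PS(H)$ and $\BF(G)\cong\BF(H)$ guarantee a finite chain of finite directed graphs
\[
G=G_0,\ G_1,\ \ldots,\ G_k=H
\]
in which each $G_{i+1}$ is obtained from $G_i$ by a single out-delay, in-delay, out-splitting, or in-splitting, or by the inverse of one of these moves. The first thing I would check is that every $G_i$ is finite, and that a finite graph has no infinite receivers, since each vertex then receives only finitely many of the finitely many edges. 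This is precisely the hypothesis required to invoke Theorems \ref{thm:in-delay} and \ref{thm:in-split}, whereas Theorems \ref{thm:out-delay} and \ref{thm:out-split} carry no finiteness restriction at all; and since $\category{C}$ is assumed to have binary coproducts, the remaining hypotheses of all four theorems are satisfied at every stage of the chain.

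Next I would treat the two directions of each move uniformly. Each of the four theorems asserts an equivalence $\CKL_{\category{C}}(G_i)\cong\CKL_{\category{C}}(G_{i+1})$ whenever $G_{i+1}$ is a delay or splitting of $G_i$. When the move in Franks's chain is instead the \emph{inverse} of a delay or splitting, so that $G_i$ is the delay or splitting of $G_{i+1}$, the same theorem applied to the pair $(G_{i+1},G_i)$ yields $\CKL_{\category{C}}(G_{i+1})\cong\CKL_{\category{C}}(G_i)$; because equivalence of categories is a symmetric relation, this again gives $\CKL_{\category{C}}(G_i)\cong\CKL_{\category{C}}(G_{i+1})$. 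Thus in either case we obtain an equivalence between consecutive terms of the chain, and composing the equivalences $\CKL_{\category{C}}(G_0)\cong\CKL_{\category{C}}(G_1)\cong\cdots\cong\CKL_{\category{C}}(G_k)$ produces the desired equivalence $\CKL_{\category{C}}(G)\cong\CKL_{\category{C}}(H)$.

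The only genuine obstacle is definitional rather than mathematical: one must confirm that the delay and splitting moves appearing in Franks's theorem are instances of the (deliberately general) Definitions \ref{def:out-delay}, \ref{def:in-delay}, \ref{def:out-split}, and \ref{def:in-split} used here, and that applying any such move to a finite graph again produces a finite graph, so that the chain remains within the class of graphs to which the four theorems apply. Both points are routine to verify—the combinatorial state-splitting and delay operations of symbolic dynamics are special cases of the constructions defined above, and each of these operations visibly preserves finiteness—but checking them is what licenses the reduction to Theorems \ref{thm:out-delay}--\ref{thm:in-split} and thereby completes the argument.
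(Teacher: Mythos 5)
Your proposal is correct and follows exactly the paper's own argument: the paper proves this corollary (note the \qed in its statement) by the same composition of Franks's classification theorem with Theorems \ref{thm:out-delay}, \ref{thm:in-delay}, \ref{thm:out-split}, and \ref{thm:in-split}, observing that finite graphs have no infinite receivers so that the two ``in'' theorems apply whenever $\category{C}$ has binary coproducts. Your explicit attention to the symmetry of equivalence for inverse moves and to the preservation of finiteness along the chain is more careful than the paper's one-paragraph justification, but it is the same proof.
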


We do not know whether the converse of Corollary \ref{cor:PSBF} is true. It is known that if $\CKL_{\category{\Mod(\mathbb{F})}}(G) \cong \CKL_{\category{\Mod(\mathbb{F})}}(H)$ then $\BF(G)\cong \BF(H)$, because in this case the Leavitt path algebras $L_{\mathbb{F}}(G^{\opp})$ and $L_{\mathbb{F}}(H^{\opp})$ are Morita equivalent (Theorem \ref{thm:LPA-equivalence}), and the Bowen-Franks groups are the $K_0$-groups of these algebras (see for example \cite{Abrams-classification}). So a positive answer to Question \ref{q:PS} from the introduction (is there a category $\category{C}$ with binary coproducts such that the Parry-Sullivan number $\PS(G)$ is an invariant of the category $\CKL_{\category{C}}(G)$?) would imply the converse of Corollary \ref{cor:PSBF} (if $\CKL_{\category{C}}(G)\cong \CKL_{\category{C}}(H)$ for all $\category{C}$ then $\PS(G)=\PS(H)$ and $\BF(G)\cong \BF(G)$). As we explained in the introduction, we hope that an answer to this question will help to clear up some of the mystery surrounding the role of the Parry-Sullivan number in the classification of Leavitt path algebras.

\bibliographystyle{alpha}
\bibliography{Leavitt.bib}

 \end{document}